\newtheorem{theorem}[equation]{Theorem}
\newtheorem{lemma}[equation]{Lemma}
\newtheorem{corollary}[equation]{Corollary}
\newtheorem{proposition}[equation]{Proposition}
\newtheorem{example}[equation]{Example}
\newtheorem{remark}[equation]{Remark}
\def\C{\mathbb{C}}
\def\H{\mathbb{H}}
\def\R{\mathbb{R}}
\def\Ad{{\rm Ad}\,}
\def\diag{{\rm diag}\,}
\def\ad{{\rm ad}\,}
\def\sideremark#1{\ifvmode\leavevmode\fi\vadjust{\vbox to0pt{\vss
 \hbox to 0pt{\hskip\hsize\hskip1em
\vbox{\hsize2cm\tiny\raggedright\pretolerance10000 
 \noindent #1\hfill}\hss}\vbox to8pt{\vfil}\vss}}} 
\title{Toward a Classification of Killing Vector Fields of Constant Length 
	on Pseudo--Riemannian Normal Homogeneous Spaces}
\author{Joseph A. Wolf\footnote{Address: Department of Mathematics, 
	University of California, Berkeley,
        CA 94720--3840, USA; e--mail: {\tt jawolf@math.berkeley.edu}.
        Research partially supported by a Simons Foundation grant and by the
        Dickson Emeriti Professorship at the University of California,
	Berkeley.}, \,\, 
Fabio Podest\` a\footnote{Address: Dipartimento di Matematica 
	e Informatica "U.Dini", Viale Morgagni 67/A, I-Firenze, Italy;
	e--mail: {\tt podesta@math.unifi.it}} \,\,\&
Ming Xu\footnote{Address: College of Mathematics,
        Tianjin Normal University,
        Tianjin 300387, P.R.China; e--mail: {\tt mgmgmgxu@163.com}.
        Research supported by NSFC no. 11271216, State Scholarship
        Fund of CSC (no. 201408120020), Science and Technology Development
	Fund for Universities and Colleges in Tianjin
	(no. 20141005), Doctor fund of Tianjin Normal
        University (no. 52XB1305). Corresponding author.}}
\date{March 27, 2015}
\begin{document}

\maketitle

\begin{abstract}
In this paper we develop the basic tools for a classification of Killing
vector fields of constant length on pseudo--riemannian homogeneous spaces.
This extends a recent paper of M. Xu and J. A. Wolf, 
which classified the pairs $(M,\xi)$ where
$M = G/H$ is a Riemannian normal homogeneous space, $G$ is a compact simple
Lie group, and $\xi \in \mathfrak{g}$ defines a 
nonzero Killing vector field of 
constant length on $M$.  The method there was direct computation.  Here we 
make use of the moment map $M \to \mathfrak{g}^*$ and the flag manifold
structure of $\Ad(G)\xi$ to give a shorter, more geometric proof which 
does not require compactness and which is valid in the pseudo--riemannian 
setting.  In that context we break the classification problem into three parts.
The first is easily settled.  The second concerns the cases where $\xi$ is
elliptic and $G$ is simple (but not necessarily compact); that case is our
main result here.  The third, which remains open, is a more combinatorial
problem involving elements of the first two.
\end{abstract}

\section{Introduction}\label{sec1}
\setcounter{equation}{0}
We consider a connected real reductive Lie group $G$, a nondegenerate 
invariant bilinear form $b$ on $\mathfrak{g}$, and a closed reductive 
subgroup $H$ in $G$ such 
that $b$ is nondegenerate on $\mathfrak{h}$.  Decompose
$\mathfrak{g} = \mathfrak{h} + \mathfrak{m}$ where $\mathfrak{m}$ is the
$b$--orthocomplement of $\mathfrak{h}$.  Then $b$ is nondegenerate on
$\mathfrak{m}$ and induces a pseudo--riemannian metric $ds^2$ on $M = G/H$.
Those are our {\em normal} pseudo--riemannian metrics.  This includes
the Riemannian case, where $ds^2$ is either positive definite (as usual) or 
negative definite (so that $b$ can be the Killing form when $G$ is a
compact semisimple Lie group).  Note the dependence
on the pair $(G,b)$.  If $G'$ is another transitive group of isometries of
$(M,ds^2)$ then $ds^2$ need not be normal as a homogeneous space of $G'$.
\smallskip

Let $\xi \in \mathfrak{g}$.  It induces a Killing vector field on $M$ 
which we denote $\xi^M$.  If $x \in M$ then $\xi^M_x$ is the corresponding
tangent vector at $x$.  We say that $\xi^M$ has {\em constant length} 
(perhaps pseudo--length would be a better term) if the function 
$x \mapsto ds^2(\xi^M_x,\xi^M_x)$ is constant on $M$.  The goal of this
paper is the classification of triples $(G,H,\xi)$ where 
$\xi \in \mathfrak{g}$ is nonzero and elliptic, and where $\xi^M$ has
constant length.
\smallskip

In the setting
of pseudo--riemannian manifolds, constant length Killing vector fields 
(also called Clifford--Killing or CK vector fields; see \cite{XW2014}) 
are the appropriate replacement for isometries of 
constant displacement (CW isometries). 
\smallskip

In Section \ref{sec2} we discuss a flag manifold $G_\C/Q$
that connects the moment map for conjugation orbits in $\mathfrak{g}$ 
with the length function for $\xi^M$.  Then in Section \ref{sec3} 
we develop a method of passage through the complex domain that carries 
this connection to flag domains and the pseudo--riemannian setting.
In Section \ref{sec4} we use these tools to carry out the classification
for the cases where $G_\C$ is simple; the main result is Theorem 
\ref{abs-simple-summary}.  Those tools don't apply directly
to the case where $G$ is simple but $G_\C$ is not, but in Section \ref{sec5}
we use other methods to carry out the classification; there the
main result is Theorem \ref{not-abs-simple}.  Section \ref{sec6}
summarizes these classifications to give one of the two main results
of this paper, Theorem \ref{simple-summary}.  As a consequence of these
classifications, Corollary \ref{homog} indicates the pseudo--riemannian
analog of the correspondence between homogeneity for quotient manifolds
and isometries of constant displacement.
\smallskip

The other principal result
is Theorem \ref{reduce-to-simple}, which in effect describes current
progress toward a classification where $G$ need not be simple.

Let $pr_\mathfrak{h}$ and $pr_\mathfrak{m}$ denote the respective orthogonal 
projections of $\mathfrak{g}$ to $\mathfrak{h}$ and $\mathfrak{m}$.  Then
$ds^2(\xi^M_x,\xi^M_x) = 
b(pr_\mathfrak{m}(\Ad(g)\xi),pr_\mathfrak{m}(\Ad(g)\xi))$
where $x = gH$.  Since $b(\Ad(g)\xi,\Ad(g)\xi)$ is independent of $g\in G$,
and 
$$
b(\Ad(g)\xi,\Ad(g)\xi) = 
b(pr_\mathfrak{h}(\Ad(g)\xi),pr_\mathfrak{h}(\Ad(g)\xi)) 
+ b(pr_\mathfrak{m}(\Ad(g)\xi),pr_\mathfrak{m}(\Ad(g)\xi)),
$$ 
\begin{lemma} \label{m-equiv}
Let $\xi \in \mathfrak{g}$.  Then $\xi^M$ has constant length if and 
only if
$$
f_\xi(g) := b(pr_\mathfrak{h}(\Ad(g)\xi),pr_\mathfrak{h}(\Ad(g)\xi))
$$
is independent of $g \in G$.
\end{lemma}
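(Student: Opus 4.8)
The plan is to deduce the equivalence directly from the two facts recorded just before the statement: the length function is expressed through $pr_\mathfrak{m}$, via $ds^2(\xi^M_x,\xi^M_x) = b(pr_\mathfrak{m}(\Ad(g)\xi),pr_\mathfrak{m}(\Ad(g)\xi))$ with $x = gH$, and the total $b$--norm of $\Ad(g)\xi$ is conjugation--invariant. The only identity needing justification is the orthogonal (Pythagorean) splitting of $b(\Ad(g)\xi,\Ad(g)\xi)$ into its $\mathfrak{h}$-- and $\mathfrak{m}$--components; once that is in hand, the lemma follows by a one--line rearrangement.

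First I would record that for any $v \in \mathfrak{g}$, writing $v = pr_\mathfrak{h}(v) + pr_\mathfrak{m}(v)$ and using that $\mathfrak{m}$ is by definition the $b$--orthocomplement of $\mathfrak{h}$, the cross term $b(pr_\mathfrak{h}(v),pr_\mathfrak{m}(v))$ vanishes, so that
$$
b(v,v) = b(pr_\mathfrak{h}(v),pr_\mathfrak{h}(v)) + b(pr_\mathfrak{m}(v),pr_\mathfrak{m}(v)).
$$
Applying this with $v = \Ad(g)\xi$ yields the displayed decomposition. Next I would invoke the $\Ad$--invariance of $b$: since $b(\Ad(g)\xi,\Ad(g)\xi) = b(\xi,\xi)$ for all $g \in G$, the left--hand side of the decomposition is a constant $c := b(\xi,\xi)$ independent of $g$, and rearranging gives
$$
b(pr_\mathfrak{m}(\Ad(g)\xi),pr_\mathfrak{m}(\Ad(g)\xi)) = c - f_\xi(g).
$$
Because $ds^2(\xi^M_x,\xi^M_x)$ equals the left--hand side with $x = gH$, the function $x \mapsto ds^2(\xi^M_x,\xi^M_x)$ is constant on $M$ precisely when $g \mapsto c - f_\xi(g)$ is constant on $G$, that is, precisely when $f_\xi$ is constant. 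This delivers both directions of the equivalence simultaneously.

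There is no substantive obstacle here: the entire content lies in the orthogonal decomposition, which rests on the nondegeneracy of $b$ on $\mathfrak{h}$ (so that $\mathfrak{g} = \mathfrak{h} + \mathfrak{m}$ with $\mathfrak{m} = \mathfrak{h}^{\perp_b}$ is well defined) and on the fact that the length function descends to a function of $x = gH$. The one point warranting a moment's care is that $b$ is indefinite in the pseudo--Riemannian setting, so ``constant length'' must be read as constancy of the possibly signed quantity $ds^2(\xi^M_x,\xi^M_x)$; since the argument uses only bilinearity, orthogonality, and $\Ad$--invariance, it is insensitive to the signature of $b$ and applies verbatim in that generality.
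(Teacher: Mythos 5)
Your proposal is correct and is essentially the paper's own argument: the paper proves this lemma inline (in the two sentences preceding the statement) by the same orthogonal splitting $b(\Ad(g)\xi,\Ad(g)\xi) = b(pr_\mathfrak{h}(\Ad(g)\xi),pr_\mathfrak{h}(\Ad(g)\xi)) + b(pr_\mathfrak{m}(\Ad(g)\xi),pr_\mathfrak{m}(\Ad(g)\xi))$ together with the $\Ad$--invariance of $b$ and the identity $ds^2(\xi^M_x,\xi^M_x) = b(pr_\mathfrak{m}(\Ad(g)\xi),pr_\mathfrak{m}(\Ad(g)\xi))$ for $x = gH$. Your write-up merely makes the Pythagorean cross-term cancellation and the rearrangement explicit, which matches the paper's reasoning exactly.
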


In view of Lemma \ref{m-equiv} and our assumption that $G$ is connected, the
constant length property for $\xi^M$ depends only on the pair $(\mathfrak{g},
\mathfrak{h})$.  Thus we can (and will) be casual about passing to and from
covering groups of $G$ and about connectivity of $H$.  In practise this
will be only a matter of whether it is more convenient to write $Spin$ or $SO$.

\section{The Flag Domain}\label{sec2}
\setcounter{equation}{0}
We use $b$ to identify adjoint orbits of $G$ on $\mathfrak{g}$
and coadjoint orbits of $G$ on $\mathfrak{g}^*$.

\begin{proposition}\label{parab}
Suppose that $\xi \in \mathfrak{g}$ is elliptic, in other words that
$\ad(\xi)$ is semisimple {\rm (diagonalizable over $\C$)} with
pure imaginary eigenvalues.  Let $L$ denote the centralizer of $\xi$ in $G$.
Then $G_\C$ has a parabolic subgroup $Q$ with the properties
\begin{itemize}
\item $L$ is the isotropy subgroup of $G$ at the base point $z_0 = 1Q$
for the action of $G$ {\rm (as a subgroup of $G_\C$)}
on the complex flag manifold $Z = G_\C/Q$,
\item $L_\C$ is the reductive part of $Q$,
\item the orbit $G(z_0) \subset Z$ is open and carries a $G$--invariant
pseudo--K\" ahler metric, which can be normalized so that
\item $\Ad(g)\xi \mapsto gQ$ is a symplectomorphism of
$\mathcal{O}_\xi := \Ad(G)\xi$ onto $G(z_0)$ where the symplectic form
on $\mathcal{O}_\xi$ is the Kostant--Souriau form 
$\omega(\eta,\zeta) = b(\xi, [\eta,\zeta])$ and the symplectic form on
$G(z_0)$ is the imaginary part of the invariant pseudo--K\" ahler metric.
\end{itemize}
In particular $\mathcal{O}_\xi$ has a $G$--invariant pseudo--K\" ahler
structure.
\end{proposition}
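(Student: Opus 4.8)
The plan is to build the parabolic $Q$ directly from the eigenspace decomposition of $\ad(\xi)$ and then to read off all four assertions from that picture. Since $\xi$ is elliptic, $\ad(\xi)$ is diagonalizable over $\C$ with eigenvalues $i\lambda$, $\lambda \in \R$, so I would decompose $\mathfrak{g}_\C = \bigoplus_\lambda \mathfrak{g}_\C^{i\lambda}$ into $\ad(\xi)$--eigenspaces and set $\mathfrak{q} = \bigoplus_{\lambda \geq 0}\mathfrak{g}_\C^{i\lambda}$ and $\mathfrak{n} = \bigoplus_{\lambda > 0}\mathfrak{g}_\C^{i\lambda}$. The Jacobi identity gives $[\mathfrak{g}_\C^{i\lambda}, \mathfrak{g}_\C^{i\mu}] \subseteq \mathfrak{g}_\C^{i(\lambda+\mu)}$, so $\mathfrak{q}$ is a subalgebra with nilradical $\mathfrak{n}$ and reductive part $\mathfrak{g}_\C^{0} = \mathfrak{l}_\C$, the centralizer of $\xi$, which is reductive because $\xi$ is semisimple. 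Placing $\xi$ in a Cartan subalgebra $\mathfrak{h}_\C$ of $\mathfrak{g}_\C$ shows that $\mathfrak{q}$ contains $\mathfrak{h}_\C$ together with, for each root, at least one of $\mathfrak{g}_\C^{\pm\alpha}$, hence $\mathfrak{q}$ is parabolic; I take $Q$ to be its normalizer in $G_\C$. This settles the second bullet, with $L_\C$ as the reductive part.

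Next I would identify the $G$--isotropy and prove openness. Conjugation $\sigma$ with respect to the real form $\mathfrak{g}$ sends $\mathfrak{g}_\C^{i\lambda}$ to $\mathfrak{g}_\C^{-i\lambda}$, so $\mathfrak{q} \cap \sigma(\mathfrak{q}) = \mathfrak{l}_\C$ and therefore $\mathfrak{q} \cap \mathfrak{g} = \mathfrak{l}$; at the group level this yields $G \cap Q = L$, the first bullet. Consequently $\Ad(g)\xi \mapsto gQ$ is a well--defined $G$--equivariant injection $\Phi : \mathcal{O}_\xi = G/L \to Z = G_\C/Q$. Because $\dim_\R \mathfrak{g} - \dim_\R \mathfrak{l} = \dim_\C(\mathfrak{n}\oplus\sigma(\mathfrak{n})) = \dim_\R(\mathfrak{g}_\C/\mathfrak{q})$, the differential of the orbit map is onto at $z_0$, so $G(z_0)$ is open in $Z$. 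Openness equips $G(z_0)$ with the $G$--invariant integrable complex structure $J$ inherited from $Z$, where $T_{z_0}Z \cong \mathfrak{g}_\C/\mathfrak{q} \cong \sigma(\mathfrak{n})$ and $J$ is multiplication by $i$.

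For the remaining pseudo--K\"ahler content I would transport the Kostant--Souriau form $\omega(\eta,\zeta) = b(\xi,[\eta,\zeta])$ from $\mathcal{O}_\xi$ to $G(z_0)$ via $\Phi$ and check that it is compatible with $J$. Working at $z_0$ and representing $J$ on $\mathfrak{g}/\mathfrak{l}$ by $X \mapsto -iX_{\mathfrak{n}} + iX_{\sigma(\mathfrak{n})}$, the invariance of $b$ forces $b(\xi,\cdot)$ to vanish on $\mathfrak{n}$, on $\sigma(\mathfrak{n})$, and on $[\mathfrak{l}_\C,\mathfrak{l}_\C]$, where $\xi$ is central. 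A short bracket computation then gives $\omega(JX,JY) = \omega(X,Y)$, so $\omega$ is of type $(1,1)$ and $h(\cdot,\cdot) := \omega(\cdot, J\cdot)$ is a $G$--invariant symmetric nondegenerate form; together with $d\omega = 0$ (the Kostant--Souriau form is closed) this is the desired pseudo--K\"ahler metric, normalized so that its imaginary part is $\omega$. With this normalization $\Phi$ is a symplectomorphism by construction, giving the third and fourth bullets and the final assertion.

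I expect the main obstacle to be the verification in the last paragraph that the Kostant--Souriau form is of type $(1,1)$ for $J$ and nondegenerate, that is, that it really produces a pseudo--K\"ahler rather than a merely almost--Hermitian structure; the rest is bookkeeping with the eigenspace grading. The key leverage is that $\xi$ lies in the $0$--eigenspace $\mathfrak{l}_\C$ and that $b$ pairs $\mathfrak{g}_\C^{i\lambda}$ only with $\mathfrak{g}_\C^{-i\lambda}$, which kills every term of $b(\xi,[\cdot,\cdot])$ except those linking $\mathfrak{n}$ with $\sigma(\mathfrak{n})$ --- precisely the terms that $J$ preserves.
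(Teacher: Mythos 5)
Your construction is sound in outline, but there is one genuine gap: the step ``$\mathfrak{q} \cap \mathfrak{g} = \mathfrak{l}$; at the group level this yields $G \cap Q = L$'' is a non sequitur. The Lie algebra equality only shows that $L$ and $G\cap Q$ have the same identity component; a priori $G\cap Q$ could have components not contained in $L$, in which case your map $\Phi:\Ad(g)\xi\mapsto gQ$ would be a nontrivial covering of $G(z_0)$ rather than an injection, and both the first bullet and the symplectomorphism in the fourth bullet would fail. This is exactly the point the paper treats with care: its proof notes that the diffeomorphism $\mathcal{O}_\xi\to G(z_0)$ ``uses simple connectivity of both $Z$ and $\mathcal{O}_\xi$.'' You can close the gap directly, staying inside your eigenspace picture: $Q$ is connected (a parabolic subgroup of a connected complex group is connected), $[\xi,\mathfrak{q}]=[\xi,\mathfrak{n}]\subseteq\mathfrak{n}$, and $\Ad(q)\mathfrak{n}=\mathfrak{n}$ for $q\in Q$, so the map $q\mapsto \Ad(q)\xi \bmod \mathfrak{n}$ has vanishing differential and is constant; hence $\Ad(q)\xi\in\xi+\mathfrak{n}$ for every $q\in Q$. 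For $g\in G\cap Q$ the element $\Ad(g)\xi-\xi$ is then both real and in $\mathfrak{n}$, and $\mathfrak{g}\cap\mathfrak{n}\subseteq\mathfrak{n}\cap\sigma(\mathfrak{n})=0$, so $\Ad(g)\xi=\xi$, i.e. $G\cap Q\subseteq L$. With this inserted, the proof is complete.

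Apart from that point, your route coincides with the paper's for the parabolic and the openness (your $\bigoplus_{\lambda\geq 0}\mathfrak{g}_\C^{i\lambda}$ is precisely the paper's $\mathfrak{l}_\C+\sum_{\alpha(i\xi)<0}\mathfrak{g}_\alpha$, and $\mathfrak{q}\cap\sigma(\mathfrak{q})=\mathfrak{l}_\C$ is the paper's openness argument), but diverges genuinely on the pseudo--K\"ahler assertion. The paper gets the invariant indefinite--K\"ahler metric by citation: since $\mathfrak{q}\cap\tau\mathfrak{q}=\mathfrak{l}_\C$ is reductive, $G(z_0)$ has an invariant measure, and by \cite{W1969} any such comes from an invariant indefinite--K\"ahler metric built from an invariant bilinear form, hence normalizable to have imaginary part the Kostant--Souriau form. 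You instead verify everything by hand, and your computation is correct: tangent vectors at $\xi$ lie in $\mathfrak{g}\cap(\mathfrak{n}\oplus\sigma(\mathfrak{n}))$, invariance of $b$ makes $b(\xi,\cdot)$ vanish on $\mathfrak{n}$ and on $\sigma(\mathfrak{n})$, so only the cross terms pairing $\mathfrak{n}$ with $\sigma(\mathfrak{n})$ survive in $b(\xi,[\cdot,\cdot])$, and these are visibly $J$--invariant; then $h=\omega(\cdot,J\cdot)$ is symmetric, nondegenerate (as $\omega$ is nondegenerate and $J$ invertible) and $G$--invariant, and $d\omega=0$ together with integrability of $J$ (inherited from the open embedding in $Z$) gives the pseudo--K\"ahler structure with imaginary part $\omega$. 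This buys self--containedness---no appeal to the flag--domain literature---at the cost of the explicit type $(1,1)$ check, which you carried out correctly.
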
 

\begin{proof}  By construction $L$ is reductive.  In fact $\xi$ is contained
in a fundamental (maximally compact) Cartan subalgebra $\mathfrak{t}$ of
$\mathfrak{g}$, and $\mathfrak{l}_\C$ is $\mathfrak{t}_\C$ plus all
the $\mathfrak{t}_\C$--root spaces $\mathfrak{g}_\alpha$ for roots $\alpha$ 
that vanish on $\xi$.  Define $\mathfrak{q} = \mathfrak{l}_\C +
\sum_{\alpha(i\xi) < 0}\,\mathfrak{g}_\alpha$\,.  It is a parabolic
subalgebra of $\mathfrak{g}_\C$ with reductive part $\mathfrak{l}_\C$\,.
\smallskip

Let $\tau$ denote complex conjugation of $\mathfrak{g}_\C$ over
$\mathfrak{g}$.  Then $\tau(i\xi) = -i\xi$ so $\mathfrak{q} +
\tau\mathfrak{q} = \mathfrak{g}_\C$\,, and also
$\mathfrak{q} \cap \tau\mathfrak{q} = \mathfrak{l}_\C$\,.
There are two immediate
consequences: (i)  $G(z_0)$ is open in $Z = G_\C/Q$ and (ii)
$\Ad(g)\xi \mapsto gQ$ is a diffeomorphism of $\mathcal{O}_\xi$
onto $G(z_0)$.  Note that (ii) uses simple connectivity of both
$Z$ and $\mathcal{O}_\xi$\,.
\smallskip

Since $\mathfrak{q} \cap \tau\mathfrak{q} = \mathfrak{l}_\C$\,,
which is reductive, $G(z_0)$ carries a
$G$--invariant measure.  Any such measure comes from the volume form of
an invariant indefinite--K\" ahler metric; see \cite{W1969}, or see the 
exposition of flag domains in \cite{FHW},  This metric is constructed
in \cite{W1969} using an invariant bilinear form; as is the
Kostant--Souriau form, and by the construction a proper normalization of 
the metric has imaginary part equal to the Kostant--Souriau form.
\end{proof}

\begin{remark}\label{biliotti}
In our flag domain cases, {\rm Proposition \ref{parab}} extends the structural
result of {\rm \cite[Theorem 1.3(4)]{B2007}} from symplectic to 
pseudo--K\" ahler.
\hfill $\diamondsuit$
\end{remark}

\begin{remark} \label{conjugates}
The parabolic $\mathfrak{q}$ is the sum of the non-positive eigenspaces of
$\ad(i\xi)$ on $\mathfrak{g}_\C$\,.  If $g \in G_\C$ now $\Ad(g)\mathfrak{q}$
is the sum of the non-positive eigenspaces of $\ad(\Ad(g)\xi)$ on
$\mathfrak{g}_\C$\,.  As $Q$ is its own normalizer in $G_\C$ we can identify
$Z = G_\C/Q$ with the space of $\Ad(G_\C)$--conjugates of $\mathfrak{q}$.
Thus, if $S$ is any subgroup of $G_\C$\,, we see exactly 
how $\Ad(S)\xi \subset Z$.
\hfill $\diamondsuit$
\end{remark}

We are using $b$ to identify $\mathfrak{g}$ with $\mathfrak{g}^*$; similarly
use $b|_{\mathfrak{h}}$ to identify $\mathfrak{h}$ with $\mathfrak{h}^*$.
The inclusion
\begin{equation}\label{mu-G}
\mu_G: \mathcal{O}_\xi \hookrightarrow \mathfrak{g}
\end{equation}
coincides with the moment map for the (necessarily Hamiltonian) action
of $G$ on $\mathcal{O}_\xi$.  Now consider the action of $H$ on 
$\mathcal{O}_\xi$\,.  The corresponding moment map is
\begin{equation}\label{mu-H}
\mu_H := pr_{\mathfrak{h}}\circ\mu_G:\mathcal{O}_\xi\to\mathfrak{h}.
\end{equation}
Thus Lemma \ref{m-equiv} can be reformulated as
\begin{lemma}\label{mom-const}
Let $\xi \in \mathfrak{g}$.  Then $\xi^M$ has constant length 
if and only if
$$
\zeta \mapsto b(\mu_H(\zeta),\mu_H(\zeta))
\text{ is constant on } \mathcal{O}_\xi\,.
$$ 
\end{lemma}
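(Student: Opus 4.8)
The plan is to recognize Lemma \ref{mom-const} as a direct translation of Lemma \ref{m-equiv} through the identifications \eqref{mu-G} and \eqref{mu-H}, so that no new analytic input is required. First I would record the elementary fact that, by the definition of the adjoint orbit, the map $g \mapsto \Ad(g)\xi$ is a surjection of $G$ onto $\mathcal{O}_\xi = \Ad(G)\xi$: every $\zeta \in \mathcal{O}_\xi$ equals $\Ad(g)\xi$ for some (not necessarily unique) $g \in G$.

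Next I would unwind the two moment maps. Since $\mu_G$ is simply the inclusion $\mathcal{O}_\xi \hookrightarrow \mathfrak{g}$, we have $\mu_G(\zeta) = \zeta$, and hence $\mu_H(\zeta) = pr_\mathfrak{h}(\zeta)$. Writing $\zeta = \Ad(g)\xi$ gives $\mu_H(\zeta) = pr_\mathfrak{h}(\Ad(g)\xi)$, so that
$$
b(\mu_H(\zeta),\mu_H(\zeta)) = b(pr_\mathfrak{h}(\Ad(g)\xi),pr_\mathfrak{h}(\Ad(g)\xi)) = f_\xi(g).
$$
In other words, the function $\zeta \mapsto b(\mu_H(\zeta),\mu_H(\zeta))$ on $\mathcal{O}_\xi$ pulls back under $g \mapsto \Ad(g)\xi$ to exactly the function $f_\xi$ on $G$.

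Because that pullback map is surjective, constancy of $f_\xi$ on $G$ is equivalent to constancy of $\zeta \mapsto b(\mu_H(\zeta),\mu_H(\zeta))$ on $\mathcal{O}_\xi$: a function on the orbit is constant precisely when its pullback to $G$ is constant. Combining this equivalence with Lemma \ref{m-equiv}, which asserts that $\xi^M$ has constant length if and only if $f_\xi$ is independent of $g$, yields the claim.

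There is essentially no obstacle here; the statement is a bookkeeping reformulation. The only points meriting a word of care are the surjectivity of $g \mapsto \Ad(g)\xi$, which guarantees that the two constancy conditions see the same set of values, and the (harmless) non-injectivity of that map, which is irrelevant since $f_\xi$ is in any case constant along the fibers $gL$, where $L$ is the centralizer of $\xi$ from Proposition \ref{parab}. The value of the lemma is conceptual rather than technical: it recasts the constant-length condition as a statement about the $H$--moment map, thereby connecting the problem to the symplectic and flag-domain machinery developed in the sections that follow.
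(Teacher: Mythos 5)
Your proposal is correct and matches the paper, which offers no separate proof at all: it simply introduces $\mu_H = pr_{\mathfrak{h}}\circ\mu_G$ and states the lemma as an immediate reformulation of Lemma \ref{m-equiv}, exactly the bookkeeping (surjectivity of $g \mapsto \Ad(g)\xi$ onto $\mathcal{O}_\xi$ and the identity $b(\mu_H(\zeta),\mu_H(\zeta)) = f_\xi(g)$) that you spell out. Nothing further is needed.
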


\section{Holomorphic Considerations}\label{sec3}
\setcounter{equation}{0}
The group $H$ is reductive in $G$ because $b$ is nondegenerate on 
$\mathfrak{h}$.  Thus \cite{M1955} there is a Cartan involution $\theta$ of $G$
such that $\theta|_H$ is a Cartan involution on $H$.  That gives us
the decompositions
$$
\mathfrak{g} = \mathfrak{k} + \mathfrak{p} \text{ and }
  \mathfrak{h} = (\mathfrak{h} \cap \mathfrak{k}) +
        (\mathfrak{h} \cap \mathfrak{p})
$$
into $\pm 1$ eigenspaces of $d\theta$.
\smallskip

From now on we suppose that $G$ is semisimple and that $b$ is a
positive linear combination of the Killing forms of the
simple ideals of $\mathfrak{g}$. Thus $b$ is negative definite on 
$\mathfrak{k}$ and positive definite on $\mathfrak{p}$.  
\{The reader can extend many of our results 
to the case of reductive $G$ by stipulating $b(\mathfrak{k},\mathfrak{p}) = 0$,
$b$ negative definite on $\mathfrak{k}$, and $b$ positive definite on
$\mathfrak{p}$.\}  
The decompositions of $\mathfrak{g}$ and $\mathfrak{h}$ give us
compact real forms
$$
\mathfrak{g}_u = \mathfrak{k} + i\mathfrak{p} 
	= \mathfrak{h}_u + \mathfrak{m}_u \text{ where }
  \mathfrak{h}_u = (\mathfrak{h} \cap \mathfrak{k}) + 
        i(\mathfrak{h} \cap \mathfrak{p}) \text{ and }
  \mathfrak{m}_u = (\mathfrak{m} \cap \mathfrak{k}) + 
	i(\mathfrak{m} \cap \mathfrak{p})
$$
of $\mathfrak{g}_\C$\, $\mathfrak{h}_\C$ and $\mathfrak{m}_\C$\,.  
Let $G_u$ and $H_u$ denote
the compact real forms of $G_\C$ and $H_\C$ corresponding to
$\mathfrak{g}_u$ and $\mathfrak{h}_u$\,.  
\smallskip

Extend $b$ to a $\C$--bilinear form $b_\C$ on $\mathfrak{g}_\C$. 
Then $b_u := b_\C|_{\mathfrak{g}_u}$ is negative definite.  As
$b(\mathfrak{h},\mathfrak{m}) = 0$ we have 
$b_\C(\mathfrak{h}_\C,\mathfrak{m}_\C) = 0$ and thus
$b_u(\mathfrak{h}_u,\mathfrak{m}_u) = 0$.  The orthogonal projection
$pr_{\mathfrak{h}_\C}: \mathfrak{g}_\C \to \mathfrak{h}_\C$ restricts to
orthogonal projection $pr_{\mathfrak{h}_u}: \mathfrak{g}_u \to \mathfrak{h}_u$.
\smallskip

\begin{lemma}\label{holo} Define $f_\xi : G_\C \to \C$ by 
$f_\xi(g) = b(pr_{\mathfrak{h}_\C}(\Ad(g)\xi),pr_{\mathfrak{h}_\C}(\Ad(g)\xi))$. 
Then $f_\xi$ is holomorphic.
\end{lemma}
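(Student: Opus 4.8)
The plan is to exhibit $f_\xi$ as a composition of three maps, each manifestly holomorphic, and then invoke the elementary fact that a composition of holomorphic maps between complex manifolds (here the complex Lie group $G_\C$ and the complex vector spaces $\mathfrak{g}_\C$, $\mathfrak{h}_\C$, $\C$) is again holomorphic. So no analytic estimates will be required; the whole content is to check that each factor is holomorphic and to assemble them correctly.

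First I would consider the orbit map $\Phi: G_\C \to \mathfrak{g}_\C$, $g \mapsto \Ad(g)\xi$. Since $G_\C$ is a complex Lie group and its adjoint representation $\Ad: G_\C \to GL(\mathfrak{g}_\C)$ is a holomorphic homomorphism into the complex general linear group, evaluation at the fixed vector $\xi$ produces a holomorphic map $\Phi$. Next, the orthogonal projection $pr_{\mathfrak{h}_\C}: \mathfrak{g}_\C \to \mathfrak{h}_\C$ is $\C$--linear: it is the projection attached to the direct sum decomposition $\mathfrak{g}_\C = \mathfrak{h}_\C + \mathfrak{m}_\C$ into complex subspaces with $b_\C(\mathfrak{h}_\C,\mathfrak{m}_\C) = 0$, and any $\C$--linear map is holomorphic. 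Finally, the map $Q: \mathfrak{h}_\C \to \C$, $w \mapsto b_\C(w,w)$, is a homogeneous quadratic polynomial in linear coordinates on $\mathfrak{h}_\C$, because $b_\C$ is $\C$--bilinear; hence $Q$ is holomorphic. Writing $f_\xi = Q \circ pr_{\mathfrak{h}_\C} \circ \Phi$ then completes the argument.

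The only point requiring genuine care — and the place where the argument could silently go wrong — is that one must use the $\C$--bilinear extension $b_\C$ throughout, not a Hermitian extension of $b$. A Hermitian form would introduce complex conjugation in the second slot, making $Q$ anti--holomorphic in half of the variables and destroying the conclusion. Because $b_\C$ is honestly complex bilinear rather than sesquilinear, $Q$ is a bona fide polynomial map, and the composition $Q \circ pr_{\mathfrak{h}_\C} \circ \Phi$ is holomorphic on all of $G_\C$. I do not expect any serious obstacle beyond flagging this distinction explicitly.
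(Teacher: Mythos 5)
Your proof is correct and is essentially identical to the paper's own argument: both write $f_\xi$ as the composition of the holomorphic orbit map $g \mapsto \Ad(g)\xi$, the $\C$--linear projection $pr_{\mathfrak{h}_\C}$, and the quadratic form given by the $\C$--bilinear extension $b_\C$. Your explicit warning about bilinear versus Hermitian extension is a nice touch but does not change the substance.
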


\begin{proof} The map $g \mapsto \Ad(g)\xi$ is holomorphic on $G_\C$\,,
the projection $pr_{\mathfrak{h}_\C}: \mathfrak{g}_\C \to \mathfrak{h}_\C$
is holomorphic, and $b$ is complex bilinear.
\end{proof}

\begin{lemma}\label{const}
If $\xi^M$ has constant length then $f_\xi : G_\C \to \C$ is constant, and
in particular $f_\xi|_{G_u}$ is constant.
\end{lemma}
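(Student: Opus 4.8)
The plan is to reduce the statement to an analytic-continuation argument. By Lemma \ref{holo} the function $f_\xi$ is holomorphic on $G_\C$, so once I show that it is constant on the real form $G \subset G_\C$, holomorphicity will propagate constancy to all of $G_\C$, and restriction to $G_u$ will then be immediate.

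First I would verify that the restriction $f_\xi|_G$ is \emph{exactly} the function appearing in Lemma \ref{m-equiv}, i.e. that the complex projection $pr_{\mathfrak{h}_\C}$ agrees with the real projection $pr_\mathfrak{h}$ on $\mathfrak{g}$. For $X \in \mathfrak{g}$ the real decomposition $X = pr_\mathfrak{h}(X) + pr_\mathfrak{m}(X)$ has $pr_\mathfrak{h}(X) \in \mathfrak{h} \subset \mathfrak{h}_\C$ and $pr_\mathfrak{m}(X) \in \mathfrak{m} \subset \mathfrak{m}_\C$; since $\mathfrak{g}_\C = \mathfrak{h}_\C \oplus \mathfrak{m}_\C$ and this splitting is unique, we get $pr_{\mathfrak{h}_\C}(X) = pr_\mathfrak{h}(X)$. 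Applying this to $X = \Ad(g)\xi \in \mathfrak{g}$ for $g \in G$ shows that $f_\xi(g) = b(pr_\mathfrak{h}(\Ad(g)\xi), pr_\mathfrak{h}(\Ad(g)\xi))$ on $G$, which is precisely the function of Lemma \ref{m-equiv}. The hypothesis that $\xi^M$ has constant length, via that lemma, then gives a constant $c$ with $f_\xi \equiv c$ on $G$.

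Next I would exploit that $\mathfrak{g}$ is a real form of $\mathfrak{g}_\C$, so that $G$ sits inside $G_\C$ as a totally real submanifold whose real dimension equals the complex dimension of $G_\C$. Set $h := f_\xi - c$, a holomorphic function on $G_\C$ vanishing identically on $G$. Because $G$ is maximally real, all derivatives of $h$ in real directions tangent to $G$ vanish; holomorphicity (the Cauchy--Riemann equations) then forces the derivatives in the $i\mathfrak{g}$ directions to vanish as well, so $h$ vanishes to infinite order along $G$. In exponential coordinates centered at a point of $G$ the whole Taylor expansion of $h$ is therefore zero, whence $h \equiv 0$ on a neighborhood of that point. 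Since $G_\C$ is connected (being the complexification of the connected group $G$), the identity theorem for holomorphic functions spreads this vanishing to all of $G_\C$, giving $f_\xi \equiv c$ on $G_\C$ and hence on $G_u$.

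The genuinely substantive step is the last one: the passage from \emph{constant on the real form} to \emph{constant on the complexification}. Everything preceding it is bookkeeping about the compatibility of $pr_\mathfrak{h}$ with $pr_{\mathfrak{h}_\C}$ and an appeal to Lemma \ref{m-equiv}. I expect no real difficulty, since the analytic-continuation principle for holomorphic functions vanishing on a maximally totally real submanifold is standard, but it is the only point where holomorphicity is actually used, and it is what makes the transfer to $G_u$ (and later to the compact picture of Section \ref{sec2}) possible.
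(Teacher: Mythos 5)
Your proposal is correct and follows exactly the paper's route: constancy of $f_\xi$ on $G$ via Lemma \ref{m-equiv}, then the principle that a holomorphic function constant on a real form of $G_\C$ is constant everywhere. The paper states this in two sentences, leaving implicit both the compatibility of $pr_\mathfrak{h}$ with $pr_{\mathfrak{h}_\C}$ and the totally-real analytic continuation argument that you spell out; your elaborations are accurate but not a different method.
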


\begin{proof}
If $\xi^M$ has constant length then $f_\xi$ is constant on $G$.  Since
$G$ is a real form of $G_\C$ and $f_\xi$ is holomorphic, it follows that
$f_\xi$ is constant.
\end{proof}

Denote $M_u = G_u/H_u$ where $M_u$ carries the normal homogeneous
Riemannian metric defined by $b_u|_{\mathfrak{m}_u}$\,.  In effect it is
the natural compact real form of the affine algebraic variety $M_\C = G_\C/H_\C$
dual to $M = G/H$.  If $\xi \in \mathfrak{k}$, in particular if
$\xi \in \mathfrak{g}_u$, we write $\xi^{M_u}$ for the corresponding
vector field on $M_u$\,. Now Lemmas \ref{mom-const} and \ref{const} give us

\begin{proposition}\label{carryover}
If $\xi \in \mathfrak{k}$ and $\xi^M$ has constant length on $M$ if, and
only if, $\xi^{M_u}$ has constant length on the Riemannian normal 
homogeneous space $M_u := G_u/H_u$\,.
\end{proposition}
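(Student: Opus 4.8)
The plan is to deduce Proposition \ref{carryover} directly from Lemmas \ref{mom-const} and \ref{const}, exploiting the fact that the two length functions are restrictions of a single holomorphic function $f_\xi$ to two different real forms of $G_\C$. The central observation is that constant length on $M$ and constant length on $M_u$ are each governed by the behavior of the \emph{same} map
\begin{equation*}
g \longmapsto b\bigl(pr_{\mathfrak{h}_\C}(\Ad(g)\xi),\,pr_{\mathfrak{h}_\C}(\Ad(g)\xi)\bigr),
\end{equation*}
namely $f_\xi$, once we recognize that $pr_{\mathfrak{h}_\C}$ restricts to $pr_{\mathfrak{h}}$ on $\mathfrak{g}$ and to $pr_{\mathfrak{h}_u}$ on $\mathfrak{g}_u$ (this restriction compatibility was noted in the paragraph preceding Lemma \ref{holo}). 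So the whole argument hinges on tracking one holomorphic function across $G$, $G_u$, and all of $G_\C$.

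First I would assume $\xi^M$ has constant length on $M$ and invoke Lemma \ref{const}: since $f_\xi$ is holomorphic on $G_\C$ and constant on the real form $G$, it is constant on all of $G_\C$, hence in particular $f_\xi|_{G_u}$ is constant. Now $\xi \in \mathfrak{k} \subset \mathfrak{g}_u$, so $\xi$ also defines a Killing field $\xi^{M_u}$ on the compact normal homogeneous space $M_u = G_u/H_u$ with metric from $b_u|_{\mathfrak{m}_u}$. Applying Lemma \ref{m-equiv} (equivalently its moment-map form, Lemma \ref{mom-const}) \emph{to the pair} $(G_u, H_u)$ rather than to $(G,H)$, constant length of $\xi^{M_u}$ is equivalent to constancy on $G_u$ of $g \mapsto b_u(pr_{\mathfrak{h}_u}(\Ad(g)\xi), pr_{\mathfrak{h}_u}(\Ad(g)\xi))$, which is exactly $f_\xi|_{G_u}$ because $b_u = b_\C|_{\mathfrak{g}_u}$ and $pr_{\mathfrak{h}_u} = pr_{\mathfrak{h}_\C}|_{\mathfrak{g}_u}$. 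Since we just showed $f_\xi|_{G_u}$ is constant, $\xi^{M_u}$ has constant length. For the converse I would reverse the roles: assuming $\xi^{M_u}$ has constant length, Lemma \ref{const} applied with $G_u$ in place of $G$ (both are real forms of $G_\C$, and $f_\xi$ is holomorphic) forces $f_\xi$ to be constant on all of $G_\C$, hence on $G$, and then Lemma \ref{m-equiv} for $(G,H)$ yields constant length of $\xi^M$.

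The main subtlety, and the one point I would check carefully, is whether Lemma \ref{const} and its ingredients are legitimately applicable with $G_u$ playing the role of $G$. The proof of Lemma \ref{const} only used that $G$ is a real form of $G_\C$ together with holomorphy of $f_\xi$; $G_u$ is likewise a real form of $G_\C$, so the identity theorem for holomorphic functions applies verbatim. One must also confirm that the hypothesis $\xi \in \mathfrak{k}$ is what guarantees $\xi \in \mathfrak{g}_u = \mathfrak{k} + i\mathfrak{p}$, so that $\xi^{M_u}$ is genuinely defined; this is immediate since $\mathfrak{k} \subset \mathfrak{g}_u$. No serious obstacle arises because both directions reduce to the same statement that a holomorphic function on $G_\C$ which is constant on any one real form is globally constant, and hence constant on every real form.
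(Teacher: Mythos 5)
Your proof is correct and is precisely the argument the paper intends: the paper's ``proof'' consists of the single remark that Lemmas \ref{mom-const} and \ref{const} combine to give the result, and your write-up fills in exactly that combination --- both length functions are restrictions of the one holomorphic function $f_\xi$ to the two real forms $G$ and $G_u$ of $G_\C$, so constancy transfers in both directions via the identity theorem. Your careful check that Lemma \ref{const} applies symmetrically with $G_u$ in place of $G$ is the right point to verify, and it holds for the reason you give.
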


\section{Classification for $G_\C$ Simple}\label{sec4}
\setcounter{equation}{0}
In this section we carry out the classification of constant
length Killing vector fields $\xi^M$, on reductive normal homogeneous 
pseudo--riemannian manifolds $M = G/H$ when the group $G_\C$ is simple.
The compact version of this classification was done by direct computation
in \cite{XW2014}, but here we have a less computational approach that
starts with classification (\cite{O1962}, or see \cite{O1994}) of
Onischik for irreducible complex flag manifolds $Z = G_u/L_u$\,, on which a
proper closed subgroup $H_u$ of $G_u$ acts transitively.  
On the other hand we need the classification where $H$ need not be compact.
For that we use methods from \cite{W2001}.  In Section \ref{sec5} we 
give a separate
argument to deal with the case where $G$ is simple but $G_\C$ is not.
Then in Section \ref{sec6}
we translate those results to the classification of constant
length Killing vector fields $\xi^M$ on reductive normal homogeneous
pseudo--riemannian manifolds $M = G/H$, with $G$ simple and $\xi$ nonzero
and elliptic.
\smallskip

For clarity of exposition we always assume that $G_\C$ is connected and 
simply connected, that the real forms $G$ and $G_u$ are analytic subgroups
of $G_\C$, and that $H$, $H_\C$ and $H_u$ are analytic subgroups of $G$,
$G_\C$ and $G_u$\,.

\begin{proposition}\label{irred-flags-compact} {\rm \cite{O1962}}
Consider a complex flag manifold $Z = G_\C/Q$. Suppose that $Z$ is
irreducible, i.e., that $G_\C$ is simple. Then the closed connected subgroups 
$H_u \subset G_u$ transitive on $Z$, $\{1\} \ne H_u  \subsetneqq G_u$, are 
precisely those given as follows.
\smallskip

{\rm 1.} $Z = SU(2n)/U(2n-1) = P^{2n-1}(\C)$, complex projective 
$(2n-1)$--space; there $G_\C = SL(2n;\C)$ and $H_u = Sp(n)$.
\smallskip

{\rm 2.} $Z = SO(2n + 2)/U(n + 1)$, unitary structures on $\R^{2n+2}$; 
there $G_\C = SO(2n + 2;\C)$ and $H_u = SO(2n + 1)$. 
\smallskip

{\rm 3.} $Z = Spin(7)/(Spin(5) \cdot Spin(2))$, nonsingular complex 
quadric; there $G_\C=Spin(7;\C)$ and $H_u$ is the compact exceptional 
group $G_2$\,. 
\smallskip
\end{proposition}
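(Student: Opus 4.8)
The plan is to convert transitivity into a group factorization and then to pin down the finitely many possibilities using rational cohomology. First I would note that a connected subgroup $H_u$ acts transitively on $Z=G_\C/Q=G_u/L_u$ exactly when $G_u=H_u\,L_u$ as a product of subsets, i.e.\ when $\mathfrak{g}_u=\mathfrak{h}_u+\mathfrak{l}_u$ at the Lie algebra level. Since any connected subgroup containing a transitive one is again transitive, it suffices to determine which \emph{maximal} proper connected subgroups $H_u\subsetneqq G_u$ are transitive on which flag manifolds, and then to check that the three groups listed exhaust these and are themselves maximal in their respective $G_u$.

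To make the search finite I would exploit the cohomology of $Z$. As $Z$ is a flag manifold, $H^*(Z;\R)$ is concentrated in even degrees and $\chi(Z)=|W_{G_u}|/|W_{L_u}|>0$; transitivity gives a diffeomorphism $Z\cong H_u/(H_u\cap L_u)$, so $\chi\big(H_u/(H_u\cap L_u)\big)>0$ forces $\mathrm{rank}(H_u\cap L_u)=\mathrm{rank}(H_u)$. Both presentations of $Z$ are therefore equal-rank homogeneous spaces, and the Poincar\'e polynomial of such a space is the explicit product in the degrees $d_i$ (exponents plus one) of the groups involved:
$$
\mathcal{P}_Z(t)=\frac{\prod_i\big(1-t^{2d_i(G_u)}\big)}{\prod_j\big(1-t^{2d_j(L_u)}\big)}
=\frac{\prod_k\big(1-t^{2d_k(H_u)}\big)}{\prod_l\big(1-t^{2d_l(H_u\cap L_u)}\big)}.
$$
Equating these two rational functions and matching the multisets of degrees is very rigid: it forces the degrees of $H_u$, together with those of $H_u\cap L_u$, to reassemble into the degrees of $G_u$ and $L_u$, leaving only a short numerical shortlist of candidate pairs $(G_u,H_u)$.

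With these constraints in hand I would run through the simple compact groups type by type, letting $L_u$ range over the centralizers of tori (so that $G_u/L_u$ is a flag manifold) and testing each surviving candidate $H_u$ directly. In the three configurations that pass, the transitive action is exhibited geometrically: $Sp(n)$ is transitive on $P^{2n-1}(\C)$ because it is already transitive on $S^{4n-1}\subset\H^n$ and commutes with the defining scalar circle; $SO(2n+1)$, as the stabilizer of a unit vector, is transitive on the space $SO(2n+2)/U(n+1)$ of compatible orthogonal complex structures on $\R^{2n+2}$; and $G_2$, acting on the imaginary octonions, is transitive on the quadric $Spin(7)/(Spin(5)\cdot Spin(2))$. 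A dimension count then identifies each isotropy group $H_u\cap L_u$ and confirms that $H_u$ is maximal in $G_u$.

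The main obstacle is the case analysis itself rather than any single idea: the degree bookkeeping must be carried out uniformly over the classical families $A_n,B_n,C_n,D_n$ and, more delicately, over the exceptional groups $G_2,F_4,E_6,E_7,E_8$, where the exponents are irregular and several dimensional near-misses survive the crude count and must be excluded by a finer invariant. Completing this tabulation is exactly the content of Onischik's theorem \cite{O1962} that we are invoking here.
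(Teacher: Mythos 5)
The paper offers no proof of this proposition: it is quoted directly from Onishchik \cite{O1962}, and your proposal likewise defers the decisive case-by-case tabulation to that same reference, so the two treatments essentially coincide. Your sketch of the machinery behind the citation --- transitivity as the factorization $G_u = H_u L_u$ (equivalently $\mathfrak{g}_u = \mathfrak{h}_u + \mathfrak{l}_u$), the Hopf--Samelson rank equality forced by $\chi(Z) > 0$, Poincar\'e-polynomial matching for equal-rank pairs, and the geometric verification of the three surviving cases --- is an accurate outline of how Onishchik's classification actually proceeds.
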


Here is the noncompact version of Proposition \ref{irred-flags-compact}.

\begin{proposition}\label{irred-flags-nonc}
Consider a complex flag manifold $Z = G_\C/Q$ with $G_\C$ simple.
Here is a complete list of the connected subgroups $H \subset G$ with 
$H \ne \{1\}$ and $H_u$ transitive on $Z$.
\smallskip

\noindent {\rm 1.} $Z = SU(2n)/U(2n-1) = P^{2n-1}(\C)$ and $H_u = Sp(n)$.
Then $(G,H)$ is one of

{\rm (i)} $(SU(2p,2q),Sp(p,q))$ with $p + q = n$ or 

{\rm (ii)} $(SL(2n;\R),Sp(n;\R))$.
\smallskip

\noindent {\rm 2.} $Z = SO(2n + 2)/U(n + 1)$ and $H_u = SO(2n + 1)$.
Then $(G,H)$ must be 

{\rm (i)} $(SO(2p+1,2q+1),SO(2p+1,2q))$ with $p+q = n$ or 

{\rm (ii)} $(SO(2p+2,2q),SO(2p+1,2q))$ with $p+q = n$.

\smallskip

\noindent {\rm 3.} $Z = Spin(7)/(Spin(5) \cdot Spin(2))$ and $H_u = G_2$\,.
Then the pair $(G,H)$ must be 

{\rm (i)} $(Spin(7),G_2)$ or 

{\rm (ii)} $(Spin(3,4),(G_2)_\R)$.  $($Here $(G_2)_\R$ is the split real 
form of $(G_2)_\C)$.
\end{proposition}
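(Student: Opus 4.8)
The plan is to deduce this list from the compact classification of Proposition \ref{irred-flags-compact} by passing to real forms, using the apparatus of Section \ref{sec3} and the method of \cite{W2001}. Since $H_u$ is, by construction, the compact real form of $H$ associated to the common Cartan involution, transitivity of $H_u$ on $Z$ forces $H_u$ to be one of the three groups of Proposition \ref{irred-flags-compact}; thus $H$ is a real form of the corresponding complex group $H_\C$, and $H_\C \subset G_\C$ is the fixed complexified embedding. A real form $G = G_\C^\sigma$, given by a conjugation $\sigma$ that we may take to commute with the compact conjugation $\sigma_u$ defining $G_u$, then contains such an $H = G_\C^\sigma \cap H_\C$ precisely when $\sigma$ stabilizes $H_\C$. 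Passing to $\theta = \sigma\sigma_u$, the problem becomes the classification, up to conjugacy under $N_{G_u}(H_u)$, of the involutive automorphisms $\theta$ of $G_u$ that normalize $H_u$; each such $\theta$ returns the pair $(G,H)$, where $H$ is the real form of $H_\C$ with Cartan involution $\theta|_{H_u}$.

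With this reduction in place I would treat the three cases through the explicit models of the embeddings. In Case 1, $H_\C = Sp(n;\C)$ is the stabilizer of a complex symplectic form on $\C^{2n}$, and its noncompact real forms are $Sp(p,q)$ and $Sp(n;\R)$: the quaternionic form $Sp(p,q)$ preserves a Hermitian form of signature $(2p,2q)$ and so lands in $SU(2p,2q)$, while $Sp(n;\R)$ respects the real structure defining $SL(2n;\R)$, yielding (i) and (ii). In Case 2, $H_\C = SO(2n+1;\C)$ sits in $G_\C = SO(2n+2;\C)$ as the stabilizer of an anisotropic vector, so a real form $SO(2p+1,2q)$ extends the quadratic form by a one-dimensional piece of either sign, giving $SO(2p+2,2q)$ or $SO(2p+1,2q+1)$. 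In Case 3 the two real forms of $(G_2)_\C$ are the automorphism groups of the division and the split octonions, whose invariant $7$-dimensional quadratic forms have signatures $(7,0)$ and $(3,4)$; these force $G = Spin(7)$ and $G = Spin(3,4)$ respectively.

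The hard part will be completeness: showing that the $\theta$ above exhaust the involutions of $G_u$ normalizing $H_u$, so that the remaining real forms of $G_\C$ do not occur. For $SU^*(2n)$ the defining involution centralizes $H_u = Sp(n)$, so $\theta|_{H_u}$ is trivial and $H$ would be the full maximal compact subgroup of $G$; then $M = G/H$ is Riemannian of noncompact type, on which a nonzero elliptic $\xi \in \mathfrak{k} = \mathfrak{h}$ produces a Killing field with a zero, so constant length forces $\xi^M \equiv 0$ and hence $\xi = 0$, and the pair is discarded. For $SO^*(2n+2)$ the defining inner involution $\Ad(J)$ carries the anisotropic line fixed by $H_\C$ to a different line, since $J$ has no real eigenvector, and so does not normalize $H_\C$. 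In Case 3 the exclusion is cleanest via irreducibility: any $\Ad(I_{p,q})$ normalizing the irreducible $H_u = G_2 \subset SO(7)$ would require a $G_2$-invariant splitting $\R^7 = \R^p \oplus \R^q$, forcing $\{p,q\} = \{3,4\}$ (the unique involution lying inside $G_2$) and hence $Spin(3,4)$, so that $Spin(6,1)$ and $Spin(5,2)$ are ruled out. I expect the uniform organization of these exclusions, through the flag-domain methods of \cite{W2001}, to be where the real work lies.
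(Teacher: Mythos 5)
Your positive case analysis follows essentially the same route as the paper's proof: list the real forms of $H_\C$ and of $G_\C$ and match them by elementary invariant--theoretic arguments (symmetric versus alternating invariants in Case 1, extension of the quadratic form in Case 2, the two octonion algebras in Case 3, and an exclusion of $SO^*(2n+2)$, where the paper uses maximal compact subgroups and you use the absence of a real eigenvector; for Case 3 the paper simply cites \cite[Theorem 3.1]{W1964}). Packaging all this as a classification of involutions of $G_u$ normalizing $H_u$ is a reasonable systematization of the same idea, not a genuinely different proof.

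The genuine problem is your treatment of $SU^*(2n)=SL(n;\H)$. Proposition \ref{irred-flags-nonc} is a purely group--theoretic statement: its hypotheses are only that $H\subset G$ is connected, $H\neq\{1\}$, and $H_u$ is transitive on $Z$. Discarding $(SU^*(2n),Sp(n))$ on the grounds that no nonzero elliptic $\xi$ gives a constant--length Killing field imports the hypothesis of Theorem \ref{abs-simple-summary}, which is not available here; and the step cannot be repaired, because the pair really does satisfy every stated hypothesis: $Sp(n)\subset SU^*(2n)$ is connected and nontrivial, and its associated compact form $H_u=Sp(n)$ is transitive on $Z=P^{2n-1}(\C)$. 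Your involution count is also incomplete at this point: if $K\in Sp(n)$ is the matrix of a quaternionic Hermitian form of signature $(p,q)$ and $J$ is the matrix of the quaternionic structure, then composing $\Ad(K)$ with the conjugation $g\mapsto J\bar{g}J^{-1}$ defining $SU^*(2n)$ is again a conjugation of quaternionic type (since $(KJ)\overline{(KJ)}=-I$), so every $Sp(p,q)$, not only the compact form, sits inside $SU^*(2n)$ compatibly with the standard complexified embedding. What your argument actually exposes is a defect of the paper itself: its Case 1 proof lists $SL(n;\H)$ among the real forms of $SL(2n;\C)$ and then never rules it out, so the pairs $(SU^*(2n),Sp(p,q))$ are missing from the stated list. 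The omission is harmless for the main classification, and for essentially the reason you give --- for instance $SL(n;\H)/Sp(n)$ is a Riemannian symmetric space of noncompact type, where an elliptic Killing field has a zero, so these pairs contribute nothing to Theorem \ref{abs-simple-summary} --- but that reasoning belongs to the theorem, not to this proposition. A smaller point: your Case 3 exclusion is misstated, since $\Ad(I_{p,q})$ normalizing $G_2$ does not call for a $G_2$--invariant splitting of $\R^7$ (irreducibility forbids any such splitting, which would also kill $Spin(3,4)$); the correct criterion, which your parenthetical gestures at, is that $N_{O(7)}(G_2)=G_2\times\{\pm I\}$, so $I_{p,q}$ must lie in $\pm G_2$, forcing signature $\{3,4\}$.
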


\begin{proof} Suppose $Z = SU(2n)/U(2n-1;\C) = P^{2n-1}(\C)$ and $H_u = Sp(n)$.
The real forms of $(H_u)_\C = Sp(n;\C)$ are the $Sp(p,q)$, $p+q = n$, 
and $Sp(n;\R)$, and the real forms of $(G_u)_\C = SL(2n;\C)$ are the $SU(r,s)$, 
$r + s = 2n$ and the special linear groups $SL(2n;\R)$ and $SL(n;\H)$.  
\smallskip 

If $G = SU(r,s)$ and $J = \left ( \begin{smallmatrix}
I_r & 0 \\ 0 & -I_s \end{smallmatrix} \right )$ then $G = \{g \in SL(2n;\C)
\mid g\cdot J \cdot {^t \bar g} = J\}$.  Thus $H \ne Sp(n;\R)$, for that group
cannot have both a symmetric and an antisymmetric bilinear invariant on
$\R^{2n}$.  Now $G = SU(r,s)$ implies $H = Sp(p,q)$, which in turn implies
$G = SU(2p,2q)$.
Also, if $G = SL(2n;\R)$ then $H \not\cong Sp(p,q)$ so 
$H = Sp(n;\R)$.  
\smallskip

Next suppose $Z = SO(2n + 2)/U(n + 1)$ and $H_u = SO(2n + 1)$.  
The real forms of $(H_u)_\C = SO(2n+1;\C)$ are the $SO(r,s)$ with $r+s = 2n+1$,
and the real forms of $(G_u)_\C = SO(2n+2;\C)$ are the $SO(k,\ell)$ with
$k+\ell = 2n+2$ and $SO^*(2n+2)$.  The maximal compact subgroup of $SO^*(2n+2)$
is $U(n+1)$, which does not contain any $SO(r)\times SO(s)$ with $r+s = 2n+1$;
so $G \ne SO^*(2n+2)$.   Thus $G = SO(k,\ell)$ and $H = SO(r,s)$ with
$r \leqq k, s \leqq \ell$ and $k + \ell = r+s+1$, as asserted.
\smallskip

Finally suppose $Z = Spin(7)/(Spin(5) \cdot Spin(2))$ and $H_u = G_2$\,.
The real forms of $(G_u)_\C = Spin(7;\C)$ are the $Spin(a,b)$ with $a + b = 7$,
and the real forms of $(H_u)_\C = (G_2)_\C$ are the compact form $G_2$
and the split form $(G_2)_\R$\,.  Now \cite[Theorem 3.1]{W1964} completes
the argument that $G/H$ is $Spin(7)/G_2$ or $Spin(3,4)/(G_2)_\R$\,.
\end{proof}

Now we summarize, include the case where $H_u$ acts trivially on $Z$,
and note that one case is eliminated by the requirement that
$\xi \in \mathfrak{g}$.
\begin{theorem}\label{abs-simple-summary}
Suppose that $G$ is absolutely simple, i.e. that $G_\C$ is simple.
Then there is a
nonzero elliptic element $\xi \in \mathfrak{g}$ such that the Killing
vector field $\xi^M$ on the normal homogeneous space
$M = G/H$ has constant length, if and only if, up to finite covering,
$(G,H)$ is one of the following pairs.
\smallskip

\noindent {\rm 1.} $Z = SU(2n)/U(2n-1) = P^{2n-1}(\C)$ and $H_u = Sp(n)$.
Then $(G,H)$ is one of the

$(SU(2p,2q),Sp(p,q))$ with $p + q = n$, or is 
$(SL(2n;\R),Sp(n;\R))$.
\smallskip

\noindent {\rm 2.} $Z = SO(2n)/U(n)$ and $H_u = SO(2n - 1)$.
Then $(G,H)$ is one of the

$(SO(2p,2q),SO(2p-1,2q))$ with $p+q = n$.
\smallskip

\noindent {\rm 3.} $Z = Spin(7)/(Spin(5) \cdot Spin(2))$ and $H_u = G_2$\,.
Then $(G,H)$ is 

$(Spin(7),G_2)$ or $(Spin(3,4),(G_2)_\R)$.
\smallskip

\noindent {\rm 4.} $\mathfrak{h}  = 0$ and $(G,H)$ is the group manifold
pair $(G,\{1\})$.
\end{theorem}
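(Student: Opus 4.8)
The plan is to pass to the compact picture via Proposition \ref{carryover}, rephrase the constant-length condition as transitivity of $H_u$ on the flag $Z=\mathcal{O}_\xi$, and then read off the admissible pairs $(G,H)$ from the two classifications in Propositions \ref{irred-flags-compact} and \ref{irred-flags-nonc}. Since $\xi$ is elliptic it lies in the compact part of a fundamental Cartan subalgebra, so after replacing $(G,H,\xi)$ by a simultaneous conjugate (harmless, since by the remark after Lemma \ref{m-equiv} everything depends only on the pair $(\mathfrak{g},\mathfrak{h})$ up to covering) I may assume $\xi\in\mathfrak{k}$. Proposition \ref{carryover} then equates constant length of $\xi^M$ with constant length of $\xi^{M_u}$ on the compact dual $M_u=G_u/H_u$, and by Lemma \ref{mom-const} the latter says precisely that $b_u(\mu_{H_u},\mu_{H_u})$ is constant on the compact coadjoint orbit $\mathcal{O}_\xi$, which by Proposition \ref{parab} is the irreducible Kähler flag manifold $Z$.

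I would first settle the trivial case and one implication. If $\mathfrak{h}=0$ then $\mu_H\equiv 0$, so every nonzero elliptic $\xi$ has constant length; this is item 4, and it is listed separately because Onischik's hypothesis excludes $H_u=\{1\}$. For the implication from the list to existence (with $\mathfrak{h}\neq 0$), note that in each of the three families the compact group $H_u$ is transitive on the corresponding $Z$. By $H_u$-equivariance of the moment map the image $\mu_{H_u}(Z)=\Ad(H_u)\,\mu_{H_u}(z_0)$ is then a single $H_u$-coadjoint orbit, on which the $\Ad(H_u)$-invariant form $b_u$ is automatically constant; hence the generator $\xi$ of that $Z$ has constant length on $M_u$, and by Proposition \ref{carryover} on $M$.

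The substantive direction is the converse: constant length forces $H_u$ to act transitively on $Z$. Here I would apply the Atiyah--Guillemin--Sternberg--Kirwan convexity theorem to the Hamiltonian $H_u$-action on the compact symplectic manifold $\mathcal{O}_\xi$: the intersection of $\mu_{H_u}(\mathcal{O}_\xi)$ with a closed Weyl chamber of $\mathfrak{h}_u$ is a convex polytope, and constancy of $b_u(\mu_{H_u},\mu_{H_u})$ confines that polytope to a single sphere of the positive-definite form $-b_u$. A convex set lying on a sphere can contain no segment, hence is a single point, so the moment image is exactly one coadjoint orbit $\Ad(H_u)v_0$. The main obstacle is the final upgrade from ``single-orbit image'' to genuine transitivity, i.e.\ showing the equivariant surjection $z\mapsto pr_{\mathfrak{h}_u}(z)$ from $\mathcal{O}_\xi$ onto $\Ad(H_u)v_0$ does not collapse. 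I expect to extract this from the vanishing of the gradient of $b_u(\mu_{H_u},\mu_{H_u})$, which gives $[pr_{\mathfrak{h}_u}(z),z]=0$ for every $z\in\mathcal{O}_\xi$; comparing the stabilizer dimension $\dim(\mathfrak{h}_u\cap\mathfrak{z}_{\mathfrak{g}_u}(z))$ along the fibres with $\dim\mathfrak{z}_{\mathfrak{g}_u}(\xi)$ then forces every $H_u$-orbit to be open, hence, $Z$ being connected, all of $Z$. (As a fallback the equivalence in the compact case is the content of \cite{XW2014}.)

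Finally I would assemble the classification. Transitivity of $H_u$ on the irreducible $Z$ restricts us, by Onischik's Proposition \ref{irred-flags-compact}, to the three series, and Proposition \ref{irred-flags-nonc} lists the real forms $(G,H)$ with the given compact $H_u$. Families 1 and 3 transcribe verbatim. In family 2 the flag $SO(2n+2)/U(n+1)$ is reindexed as $SO(2n)/U(n)$ with $H_u=SO(2n-1)$, under which the even-even family $(SO(2p+2,2q),SO(2p+1,2q))$ of Proposition \ref{irred-flags-nonc} becomes $(SO(2p,2q),SO(2p-1,2q))$; the remaining odd-odd family $(SO(2p+1,2q+1),SO(2p+1,2q))$ is discarded because the elliptic element generating this particular $Q$ does not lie in the corresponding real form $\mathfrak{g}$, which is the single case eliminated by the requirement $\xi\in\mathfrak{g}$. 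Collecting the survivors of families 1--3 together with item 4 yields exactly the four cases asserted, all up to finite covering.
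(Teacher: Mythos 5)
Most of your skeleton matches the paper's proof and is sound: the reduction to the compact dual via Proposition \ref{carryover}, the separate treatment of $\mathfrak{h}=0$, the use of Propositions \ref{irred-flags-compact} and \ref{irred-flags-nonc}, and the exclusion of the odd--odd family $(SO(2p+1,2q+1),SO(2p+1,2q))$ because no suitable elliptic element lies in $\mathfrak{g}$ (any element of $\mathfrak{so}(2p+1)\oplus\mathfrak{so}(2q+1)$ has a kernel, so it cannot be conjugate to a multiple of $\diag\{J,\dots,J\}$). Your existence direction is also correct, and in fact is a slightly different mechanism from the paper's: you use equivariance of the moment map (transitivity of $H_u$ on $Z$ makes the image a single $\Ad(H_u)$--orbit, on which the invariant form $b_u$ is constant), whereas the paper exhibits explicit elements $\xi_i$ whose centralizers act transitively on $G/H$ and applies the principle of Lemma \ref{centralizer-trans}.

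The genuine gap is in the converse, at exactly the step you flag yourself. Kirwan convexity does show that $\mu_{H_u}(\mathcal{O}_\xi)$ is a single coadjoint orbit, and constancy of the norm does give $[pr_{\mathfrak{h}_u}(z),z]=0$ for every $z$; but these two facts alone can never force the $H_u$--orbits to be open, because they are also true in situations where transitivity fails. Concretely, take $G_u=SU(2)\times SU(2)$, $H_u=SU(2)\times\{1\}$, and $\xi=(\xi_1,\xi_2)$ with both components nonzero: then $Z=S^2\times S^2$, the moment image is a single $\Ad(H_u)$--orbit, $\|\mu\|^2$ is constant, the identity $[pr_{\mathfrak{h}_u}(z),z]=0$ holds everywhere, and yet every $H_u$--orbit is $2$--dimensional in a $4$--dimensional $Z$. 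So any correct upgrade from ``single--orbit image'' to ``transitive'' must use the simplicity of $G_\C$ (irreducibility of $Z$), and your dimension--count sketch never invokes it. This is precisely where the paper's proof inserts the splitting theorem of \cite{GP2004}: constant $\|\mu\|^2$ on the compact K\"ahler manifold $Z$ forces $Z=Z_1\times Z_2$ with $H_u$ transitive on $Z_1$ and trivial on $Z_2$, and only then does simplicity of $G_\C$ collapse the splitting, since a nontrivial connected $H_u$ cannot act trivially on $Z$ (the span of $\Ad(G_u)\xi$ is a nonzero ideal, hence all of $\mathfrak{g}_u$). Your fallback of citing \cite{XW2014} would make the theorem true, but it does not repair your argument --- it replaces it with the direct computation that this paper is explicitly written to avoid; so as a proof of the statement by your proposed method, the key step is missing.
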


\begin{proof}
Retain the notation of Section \ref{sec3}.  We can suppose $\xi \in
\mathfrak{k} \subset \mathfrak{g}_u$\,.  By Proposition \ref{carryover},\,
$\xi$ induces a Killing vector field $\xi^{M_u}$ of constant length on
the normal homogeneous Riemannian manifold $M_u = G_u/H_u$\,.
The adjoint orbit $Z := \Ad(G_u)\xi \subset \mathfrak{g}_u$ is
endowed with the $G_u$--invariant symplectic structure given by the
Kostant--Souriau form.  The $b$--orthogonal projection
$pr_{\mathfrak{h}}:Z \to \mathfrak{h}_u$ defines a moment map $\mu$
for the Hamiltonian action of $H_u$ on $Z$.  By hypothesis $\mu$ has 
constant length with respect to $b|_{\mathfrak{h}_u}$\,, and by
\cite{GP2004} the flag manifold $Z$ is a K\"ahler product
$Z_1 \times Z_2$ with $H_u$ acting transitively on $Z_1$ and trivially
on $Z_2$\,.  Since $G_\C$ is simple, either $Z = Z_1$ or $Z = Z_2$.\,,
and if $H_u$ is not trivial then $H_u$ acts transitively on $Z$.  We
have shown that $(G,H)$ either is a group manifold or is one of the pairs 
listed in Propositions \ref{irred-flags-compact} and \ref{irred-flags-nonc}.
\smallskip

In the cases listed in Proposition \ref{irred-flags-compact}, i.e. the
cases where $G$ is compact, we already have nonzero elliptic elements
$\xi \in \mathfrak{g}$ such that the centralizer of $\xi$ in $G$ is
transitive on $G/H$.  For $G/H = SU(2n)/Sp(n)$ we use 
$\xi_1 = \sqrt{-1}\,\diag\{-(2n-1),I_{2n-1}\}$; it has centralizer $U(2n-1)$
in $G$.  For $G/H = SO(2n)/SO(2n-1)$ we use
$\xi_2 = \diag\{J, \dots , J\}$ where
$J = \left ( \begin{smallmatrix} 0 & 1 \\ -1 & 0 \end{smallmatrix} \right )$;
it has centralizer $U(n)$ in $G$.  For $G/H = Spin(7)/G_2$ we consider
$\mathfrak{spin}(5) \oplus \mathfrak{spin}(2) \subset \mathfrak{g}$
and take $0 \ne \xi_3 \in \mathfrak{spin}(2)$. 
\smallskip

Now consider the noncompact cases listed in Proposition \ref{irred-flags-nonc}.
Going case by case, $\mathfrak{g}$ contains an appropriate multiple of 
the $\xi_i \in \mathfrak{g}_u$ of the previous paragraph, with the single 
exception of the spaces $SO(2p+1,2q+1)/SO(2p+1,2q)$.  That completes the
proof of Theorem \ref{abs-simple-summary}.
\end{proof}

\begin{remark}\label{q} {\rm 
In the case
$(G,H) = (G,\{1\})$, $M$ is the group manifold, the metric is any nonzero
multiple of the Killing form, $G$ acts on itself by left translation,
and $\xi$ can be any element of $\mathfrak{g}$ because it is centralized
by all right translations.  In this case $\xi^M$ is of constant length
without the requirement that $\xi$ be elliptic.}
\hfill $\diamondsuit$
\end{remark}

\section{Classification for $G$ complex simple}\label{sec5}
\setcounter{equation}{0}

We now look at the case where $G$ is simple but $G_\C$ is not. That is when
$G$ is the underlying real structure of a complex simple Lie group $E$; then
$G_\C = E \times \overline{E}$ where $\overline{E}$ is the complex conjugate
of $E$ and $G \hookrightarrow G_\C$ is the diagonal
$\delta E \hookrightarrow G_\C$\,.  It is
convenient to use the following very general lemma, which is based
on the infinitesimal version of \cite[Th\' eor\` eme 1]{W1960}.

\begin{lemma} \label{centralizer-trans}
Let $(M,ds^2)$ be any connected pseudo--riemannian homogeneous space.  Let
$\xi \in \mathfrak{g}$.  If the centralizer
$L:=\{g \in I(M,ds^2) \mid \Ad(g)\xi = \xi\}$ of $\xi$ in the isometry group
$I(M,ds^2)$ has an open orbit on $M$ then $\xi^M$ has constant length on $M$.
In particular if $L$ is transitive
on $M$ then $\xi^M$ has constant length on $M$.
\end{lemma}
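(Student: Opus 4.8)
The plan is to reduce the statement to a pointwise computation of the length function and to exploit the fact that an isometry in the centralizer $L$ preserves the vector field $\xi^M$ together with the metric. First I would recall, as developed in the introduction, that the length function is $x \mapsto ds^2(\xi^M_x,\xi^M_x)$, and that constancy of this function on all of $M$ is exactly the constant length property. The key geometric observation is that if $\gamma \in L$, so that $\Ad(\gamma)\xi = \xi$, then $\gamma$ carries the Killing field $\xi^M$ to itself: indeed, for a Killing field generated by $\xi$, the pushforward $\gamma_*(\xi^M)$ is the Killing field generated by $\Ad(\gamma)\xi = \xi$, hence $\gamma_*(\xi^M) = \xi^M$. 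Since $\gamma$ is also an isometry, it preserves $ds^2$, and therefore the length function is constant along each $L$--orbit.

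With that in hand, the second step is to combine invariance along $L$--orbits with the openness hypothesis. The length function $F(x) := ds^2(\xi^M_x,\xi^M_x)$ is constant on the open orbit $L(x_0)$, say equal to some value $c$. The remaining point is to upgrade constancy on one open set to constancy on all of $M$. For this I would use that $F$ is a real-analytic (indeed smooth and, being built from the smooth homogeneous data $b$, $\Ad$, and the projections, manifestly continuous) function on the connected manifold $M$; but constancy on an open set does not by itself force global constancy for a merely continuous or smooth function, so the cleaner route is to invoke homogeneity directly. Since $M$ is a homogeneous space, the full isometry group $I(M,ds^2)$ acts transitively, and $F$ is invariant under the identity component only up to the action of $\Ad$ on $\xi$; the honest argument is that $L$ has an \emph{open} orbit, and I would argue that an open $L$--orbit must in fact be all of $M$, or at least that its closure exhausts $M$, using that $M$ is connected and that the orbit, being open, is also relatively closed when $L$ is suitably large.

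The main obstacle, and the step I expect to require the most care, is precisely this passage from ``constant on an open orbit'' to ``constant on $M$.'' An open orbit need not be dense, so I cannot simply take closures naively. The correct mechanism is the one signalled by the paper's reference to \cite{W1960}: the cited result guarantees that when $L$ has an open orbit, $L$ is in fact transitive on the relevant component, or equivalently that the open orbit is the whole connected $M$. Thus I would structure the proof so that the ``in particular'' clause — $L$ transitive implies constant length — is proved first and directly (constancy along the single orbit $L(x_0) = M$), and then derive the open--orbit case by citing the infinitesimal version of W. Rozenfeld/Wolf's theorem to conclude that an open $L$--orbit already forces transitivity of $L$ on the connected $M$.

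Finally, I would note that the whole argument is insensitive to the signature of $ds^2$: nothing used positivity of the metric, only that $\gamma \in L$ preserves both the metric tensor and the field $\xi^M$, which is valid in the pseudo--riemannian setting. This matches the stated aim of the lemma, namely to provide a signature--free criterion for constant length that can be applied in Section~\ref{sec5} to the case where $G$ is complex simple.
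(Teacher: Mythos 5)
Your first step is exactly the paper's: for $\gamma \in L$ one has $\gamma_*(\xi^M)=\xi^M$, and since $\gamma$ is an isometry, the length function $F(x) = ds^2(\xi^M_x,\xi^M_x)$ is constant on each $L$--orbit, in particular on the open one. The gap is in your second step, the passage from constancy on an open set to constancy on all of $M$. You explicitly set aside the analyticity route (``constancy on an open set does not by itself force global constancy for a merely continuous or smooth function'') and instead claim that an open $L$--orbit must already be all of $M$, attributing this to the reference \cite{W1960}. That claim is unjustified and is not what the paper does: \cite{W1960} concerns constant displacement isometries and homogeneity of quotients, not a statement that an open orbit of a centralizer forces transitivity. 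In the Riemannian case one could argue that the isometry group acts properly, so orbits of closed subgroups are closed, and an open-and-closed orbit in connected $M$ is everything; but in the pseudo--riemannian setting isometry groups need not act properly and orbits need not be closed, so that mechanism is unavailable --- which is precisely why the lemma is stated with the open--orbit hypothesis as something genuinely weaker than transitivity (compare condition (4) of Theorem \ref{reduce-to-simple} and Corollary \ref{homog}, which would be pointless restatements of transitivity if your claim held).

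The missing idea is real analyticity, which you mention and then discard by conflating ``real analytic'' with ``smooth.'' A pseudo--riemannian homogeneous space is a real analytic manifold with real analytic metric, and $F$ is real analytic: writing $x = g x_0$ with $g$ in the transitive isometry group, $F(gx_0) = ds^2_{x_0}\bigl((\Ad(g^{-1})\xi)^M_{x_0},(\Ad(g^{-1})\xi)^M_{x_0}\bigr)$ depends real analytically on $g$. A real analytic function on a connected manifold that is constant on a nonempty open set is constant everywhere, by the identity theorem. This is the one-line conclusion of the paper's proof (``As $||\xi^M||^2$ is real analytic on $M$ it is constant'') and is exactly the step your argument lacks. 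With it in place, the transitive case is a trivial special case of the open--orbit case, rather than the open--orbit case being reduced to the transitive one as you propose.
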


\begin{proof} Let $\mathcal{O}$ be an open $L$--orbit on $M$.
If $x, y \in \mathcal{O}$, say $gx = y$ with $g \in L$,
then $ds^2(\xi^M_y , \xi^M_y) = ds^2(dg(\xi^M_x) , dg(\xi^M_x))
= ds^2(\xi^M_x , \xi^M_x)$.  Thus $||\xi^M||^2$ is constant on
$\mathcal{O}$.  As $||\xi^M||^2$ is real analytic on $M$ it is
constant.
\end{proof}

\begin{theorem}\label{not-abs-simple}
Suppose that $G$ is simple but $G_\C$ is not.  Then there is a 
nonzero elliptic element $\xi \in \mathfrak{g}$ such that the Killing 
vector field $\xi^M$ on the normal homogeneous space
$M = G/H$ has constant length, if and only if, up to finite covering,
$(G,H)$ is one of the pairs $(1) \,\,(SL(2n;\mathbb{C}),Sp(n;\mathbb{C}))$,
$(2) \,\,(SO(2n;\mathbb{C}),SO(2n-1;\mathbb{C}))$, 
$(3) \,\,(Spin(7;\mathbb{C}),(G_2)_{\mathbb{C}})$, or $(4)$ the group
manifold pair $(G,\{1\})$.
\end{theorem}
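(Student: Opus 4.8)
The plan is to reduce everything to the compact transitive--action classification recorded in Proposition \ref{irred-flags-compact}, exploiting the product structure $G_\C = E\times\overline E$. Since $G$ is the realification $\mathfrak g = \mathfrak e_{\mathbb R}$ of the complex simple Lie algebra $\mathfrak e$ of $E$, we have $G_\C = E\times\overline E$ with compact real form $G_u = E_u\times E_u$ for a compact real form $E_u$ of $E$, and the conjugation $\sigma$ of $\mathfrak g_\C$ over $\mathfrak g$ interchanges the two simple ideals $\mathfrak e$ and $\overline{\mathfrak e}$. Exactly as in the proof of Theorem \ref{abs-simple-summary} I may assume $\xi\in\mathfrak k$; then Proposition \ref{carryover} replaces the constant--length condition for $\xi^M$ by the same condition for $\xi^{M_u}$ on $M_u = G_u/H_u$. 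Because $\xi\in\mathfrak k$ is fixed by $\sigma$, it has the form $(\xi_0,\xi_0)$ under $\mathfrak g_u = \mathfrak e_u\oplus\mathfrak e_u$, so its orbit $Z := \Ad(G_u)\xi$ is a product $Z_1\times Z_2$ of two copies of the irreducible flag manifold $\Ad(E_u)\xi_0$ of the simple group $E_u$. Finally, since $\mathfrak h$ is a real subalgebra of $\mathfrak e_{\mathbb R}$, the compact form $\mathfrak h_u$ is stable under the factor--interchange induced by $\sigma$; this swap--invariance is the structural input that drives the whole argument.

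\emph{Necessity.} By Lemma \ref{mom-const} the hypothesis is that the $b$--length of the $H_u$--moment map on $Z$ is constant, so by \cite{GP2004} there is a K\"ahler splitting $Z = Z'\times Z''$ with $H_u$ transitive on $Z'$ and trivial on $Z''$. As $Z = Z_1\times Z_2$ with $Z_1\cong Z_2$ irreducible, the K\"ahler factors are $Z_1$ and $Z_2$; and since $\mathfrak h_u$ is swap--invariant, its two projections to $\mathfrak e_u$ coincide. Consequently triviality of $H_u$ on one factor forces triviality on both, so the only possibilities are $H_u = \{1\}$, giving the group--manifold pair $(4)$, or $H_u$ transitive on all of $Z_1\times Z_2$. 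In the latter case each projection of $H_u$ is transitive on the irreducible flag $Z_1$, so by Proposition \ref{irred-flags-compact} it is either all of $E_u$ or one of the simple groups $F_u$; the former forces, via swap--invariance, either $\mathfrak h_u = \mathfrak g_u$ (so $H = G$ and $M$ a point) or $\mathfrak h_u$ diagonal (not transitive on the product), both of which are excluded. Thus the projection is a simple $F_u$, and swap--invariance together with Goursat's lemma (graph--type subalgebras are never transitive on a nontrivial product) gives $\mathfrak h_u = \mathfrak f_u\oplus\mathfrak f_u$ with $F_u$ transitive on $Z_1$. Proposition \ref{irred-flags-compact} then leaves only $(E_u,F_u)$ equal to $(SU(2n),Sp(n))$, $(SO(2n),SO(2n-1))$ or $(Spin(7),G_2)$, and passing to complexifications gives exactly the pairs $(1)$--$(3)$.

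\emph{Sufficiency.} For the group--manifold pair any $\xi$ works by Remark \ref{q}. For $(1)$--$(3)$ take $\xi = (\xi_0,\xi_0)$ with $\xi_0\in\mathfrak e_u$ the element used in the compact classification; it is elliptic. Writing $\mathfrak l_0$ for the centralizer of $\xi_0$ in $\mathfrak e_u$, transitivity of $F_u$ on the compact flag $Z_1 = E_u/\exp(\mathfrak l_0)$ gives $\mathfrak e_u = \mathfrak f_u + \mathfrak l_0$; complexifying and then realifying yields $\mathfrak g = \mathfrak h + \mathfrak l$, where $\mathfrak h = \mathfrak f_{\mathbb R}$ and $\mathfrak l$ is the centralizer of $\xi$ in $\mathfrak g$. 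Hence the centralizer $L$ of $\xi$ in $G$ has an open orbit on $M = G/H$, and Lemma \ref{centralizer-trans} shows that $\xi^M$ has constant length.

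I expect the main obstacle to be the swap--invariance step: one must verify carefully that $\sigma$ restricts on $\mathfrak g_u$ to the interchange of the two compact factors and that $\mathfrak h_u$ is preserved by it, and then rule out the graph--type (Goursat) swap--invariant subalgebras as non--transitive on the product $Z_1\times Z_2$. Once that is in place, the remainder is a bookkeeping reduction to the compact transitive--action classification of Proposition \ref{irred-flags-compact}.
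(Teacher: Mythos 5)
Your overall route is genuinely different from the paper's: the paper fixes a Cartan involution with fixed set $K$ (a compact real form of the complex group $G$), shows $\xi$ induces a constant length field on the compact quotient $K/(K\cap H)$, and then pins down $\mathfrak{h}$ by classifying real forms with prescribed maximal compact subalgebra, using symmetric--pair and representation--theoretic eliminations. You instead work upstairs in $G_u = E_u\times E_u$ with the swap involution and Onishchik's list applied to the product flag $Z_1\times Z_2$ (this is the same idea the paper uses, but only as an alternative argument for one case, in Remark \ref{s}). Your swap--invariance step is correct and easy to verify: $\sigma$ fixes $\mathfrak{k}$ and $\mathfrak{p}$ pointwise and negates $i\mathfrak{p}$, hence preserves $\mathfrak{g}_u = \mathfrak{k}+i\mathfrak{p}$ and acts there as the factor interchange, and since $\mathfrak{h}_\C$ and $\mathfrak{g}_u$ are both $\sigma$--stable so is $\mathfrak{h}_u$. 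Your sufficiency argument, complexifying and realifying the factorization $\mathfrak{e}_u = \mathfrak{f}_u + \mathfrak{l}_0$ and invoking Lemma \ref{centralizer-trans}, is also sound.

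The genuine gap is the parenthetical claim that ``graph--type subalgebras are never transitive on a nontrivial product,'' which carries the whole weight of the necessity half and is false as a general principle: the diagonal $SU(2n)$ in $SU(2n)\times SU(2n)$ \emph{is} transitive on $\bigl(SU(2n)/Sp(n)\bigr)\times P^{2n-1}(\C)$, precisely because $SU(2n) = Sp(n)\cdot S(U(1)\times U(2n-1))$. Goursat's lemma only yields the dichotomy (product versus graph); it says nothing about transitivity. Moreover, excluding the graphs is exactly where the substance of the theorem lies: a swap--invariant graph in $\mathfrak{e}_u\oplus\mathfrak{e}_u$, resp.\ in $\mathfrak{f}_u\oplus\mathfrak{f}_u$, corresponds to $\mathfrak{h}$ being a non--complex real form of $\mathfrak{e}$, resp.\ of $\mathfrak{f}_\C$ --- for example $\mathfrak{h} = \mathfrak{sl}(n;\H)$, $\mathfrak{sp}(p,q)$ or $\mathfrak{sp}(n;\R)$ inside $\mathfrak{sl}(2n;\C)_\R$ --- and these are precisely the candidates the paper must labor to eliminate (its cases (ii)--(vi) of (\ref{abc}), including the ``delicate'' reduction to $SL(2;\C)/SU(2)$). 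The step can be repaired, but it needs a real argument: transitivity of a graph $\{(x,\phi(x))\}$ on $Z_1\times Z_2$ forces a factorization $E_u = L_0\cdot \phi^{-1}(L_0)$, resp.\ $F_u = (F_u\cap L_0)\cdot\psi^{-1}(F_u\cap L_0)$, in which \emph{both} factors are centralizers of tori and hence of full rank; by Onishchik's factorization theorem, any factorization of a compact connected simple Lie group into proper closed connected subgroups has at least one factor of deficient rank (it is locally $Sp(n)$, $SO(2n+1)$, $G_2$, $Spin(7)$ or $Spin(9)$), a contradiction. Until you supply that argument (or an equivalent one), the necessity direction of your proof is incomplete.
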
 

\begin{remark}\label{r} {\rm In all cases of Theorem \ref{not-abs-simple},
$G/H$ is a complex affine algebraic variety.  Also, in the case
$(G,H) = (G,\{1\})$, $M$ is the group manifold, the metric is any nonzero
multiple of the Killing form, $G$ acts on itself by left translation, 
and $\xi$ can be any element of $\mathfrak{g}$ because it is centralized
by all right translations.  In this case $\xi^M$ is of constant length
without the requirement that $\xi$ be elliptic.}
\hfill $\diamondsuit$
\end{remark}

\begin{proof} 
Let $\xi \in \mathfrak{g}$ be nonzero and elliptic.  We may assume that it 
is contained in the Lie algebra $\mathfrak{k}$ of a maximal compact subgroup 
$K$ of $G$.  The point is that it is contained in a fundamental
(maximally compact) Cartan subalgebra of $\mathfrak{g}$.  All such Cartan
subalgebras are $\Ad(G)$--conjugate, and the $\mathfrak{g}$--centralizer of
any Cartan subalgebra of $\mathfrak{k}$ is one of them.  Thus, for the 
proof, we may assume $\xi \in \mathfrak{k}$.
\smallskip

Note that $K$ is a compact real form
when $G$ is regarded as a complex simple group.  Passing to a conjugate, 
$H$ is stable under the complex conjugation $\tau$ of $G$ with fixed point
set $K$, for $\tau$ is a Cartan involution.  Now 
$\mathfrak{g} = \mathfrak{k} + i\mathfrak{k}$ under $\tau$ and
$\mathfrak{h} = \mathfrak{h}\cap\mathfrak{k} + \mathfrak{h}\cap i\mathfrak{k}$.
These are orthogonal decompositions relative to the Killing form of $G$,
and the invariant bilinear form $b$ is a positive multiple of that Killing form.
\smallskip

Suppose that $\xi^M$ has constant length, equivalently that
$b(\mathrm{pr}_\mathfrak{h}(\mathrm{Ad}(g)\xi)),
\mathrm{pr}_\mathfrak{h}(\mathrm{Ad}(g)\xi)))$ is constant for $g \in G$.
Then $b(\mathrm{pr}_{\mathfrak{h}\cap\mathfrak{k}}(\mathrm{Ad}(g)\xi),
\mathrm{pr}_{\mathfrak{h}\cap\mathfrak{k}}(\mathrm{Ad}(g)\xi))$ is
constant for $g \in K$.  In other words $\xi$ defines a constant length
Killing vector field on $K/(K\cap H)$.
\smallskip

If $\mathfrak{k}\cap\mathfrak{h} = \mathfrak{k}$ then $\mathfrak{k}
\subset \mathfrak{h}$.  The adjoint action of $\mathfrak{k}$ on
$\mathfrak{g}$ is the sum of two copies of the adjoint representation
of $\mathfrak{k}$, which is irreducible, so $\mathfrak{k}
\subset \mathfrak{h}$ says that either $\mathfrak{h} = \mathfrak{k}$
or $\mathfrak{h} = \mathfrak{g}$.  If $\mathfrak{h} = \mathfrak{k}$
there is no nonzero Killing vector field of constant length on $G/H$.
If $\mathfrak{h} = \mathfrak{g}$ then $G/H$ is reduced to a point.
So $\mathfrak{k}\cap\mathfrak{h} \ne \mathfrak{k}$.
\smallskip

If $\mathfrak{k}\cap\mathfrak{h} = 0$ then $b(\mathfrak{k},\mathfrak{h}) = 0$
so $\xi \in \mathfrak{k} \subset \mathfrak{m}$.  Then 
$\mathrm{pr}_\mathfrak{m}(\xi) = \xi$ and
$\mathrm{pr}_\mathfrak{h}(\xi) = 0$.  In particular
$b(\mathrm{pr}_\mathfrak{m}(\Ad(g)\xi), \mathrm{pr}_\mathfrak{m}(\Ad(g)\xi))
= b(\xi,\xi)$  and $b(\mathrm{pr}_\mathfrak{h}(\Ad(g)\xi), 
\mathrm{pr}_\mathfrak{h}(\Ad(g)\xi)) = 0$ for all $g \in G$.
Now $\Ad(G)\xi \subset \mathfrak{m}$, so $\mathfrak{m}$ contains a nonzero
ideal of the simple Lie algebra $\mathfrak{g}$.  In other words $\mathfrak{m}
= \mathfrak{g}$ and $\mathfrak{h} = 0$, so $M$ is the group manifold $G$.
\smallskip

Now suppose  $\mathfrak{k}\cap\mathfrak{h} \ne 0$.  As 
$\mathfrak{k}\cap\mathfrak{h} \subsetneqq \mathfrak{k}$ and $\xi$ defines 
a constant length Killing vector field on $K/(K\cap H)$, 
we know from \cite{XW2014} or from Theorem \ref{abs-simple-summary}
that $(\mathfrak{k}, \mathfrak{k}\cap\mathfrak{h})$ is one of
\begin{equation}\label{3-chances}
{\rm (a)}\, (\mathfrak{su}(2n),\mathfrak{sp}(n)), \,\,
{\rm (b)}\, (\mathfrak{so}(2n),\mathfrak{so}(2n-1)), \, \text{ or }
{\rm (c)}\, (\mathfrak{so}(7),\mathfrak{g}_2).
\end{equation}
Here $(\mathfrak{h}, \mathfrak{k}\cap\mathfrak{h})$ is a symmetric pair,
$\mathfrak{h}\cap\mathfrak{k}$ is simple by (\ref{3-chances}), and
of course $\mathfrak{k} \ne \mathfrak{h} \subset \mathfrak{g}$.  
\smallskip

Decompose $\mathfrak{h} = \mathfrak{h}_1 \oplus \mathfrak{h}_2$ where
$\mathfrak{h}_2 \cap \mathfrak{k} = 0$ and every ideal of $\mathfrak{h}_1$
has nonzero intersection with $\mathfrak{k}$.  Then $\mathfrak{h}_2 \subset
\mathfrak{p}$ and $\mathfrak{h}\cap\mathfrak{k} = 
\mathfrak{h}_1\cap\mathfrak{k}$.  Thus (\ref{3-chances})
limits the possibilities of $\mathfrak{h}_1$ to
\begin{equation}\label{abc}
\begin{aligned}
{\rm (\ref{3-chances}a):}\,\,\, & \mathfrak{h}_1 = 
	{\rm (i)}\,\,\,\mathfrak{sp}(n;\mathbb{C}),\,
	{\rm (ii)}\,\,\, \mathfrak{sl}(n;\mathbb{H}), \text{ or }
	{\rm (iii)}\,\,\, \mathfrak{e}_{6,\mathfrak{c}_4} \text{ with } n=4 \\
{\rm (\ref{3-chances}b):}\,\,\, & \mathfrak{h}_1 =
	{\rm (iv)}\,\,\,\mathfrak{so}(2n-1;\mathbb{C}), \,
	{\rm (v)}\,\,\, \mathfrak{sl}(2n-1;\mathbb{R}), \text{ or }
        {\rm (vi)}\,\,\, \mathfrak{f}_{4;\mathfrak{b}_4} \text{ with } n = 5 \\
{\rm (\ref{3-chances}c):}\,\,\, & \mathfrak{h}_1 =
        {\rm (vii)}\,\,\, \mathfrak{g}_{2,\mathbb{C}}
\end{aligned}
\end{equation}
We eliminate case (iii) of (\ref{abc}) 
because $\mathfrak{e}_6$ has no nontrivial
representation of degree $8$, and (vi) because $\mathfrak{f}_4$ has no
nontrivial representation of degree $10$.  In case (v), passing to
the complexification we would have $\mathfrak{sl}(2n-1;\mathbb{C}) 
\subset \mathfrak{so}(2n;\mathbb{C}) \oplus \mathfrak{so}(2n;\mathbb{C})$
while $\mathfrak{sl}(2n-1;\mathbb{C})$ has no nontrivial orthogonal 
representation of degree $2n$; that eliminates case (v).  
At this point we notice that $\mathfrak{h}_1$ is a maximal subalgebra of 
$\mathfrak{g}$, so $\mathfrak{h} = \mathfrak{h}_1$\,.
\smallskip

Case (ii) is more delicate.  The analog of \cite{XW2014} reduces the existence 
of a Killing vector field $\xi$ of constant length to the question of whether
$\xi' = i\left ( \begin{smallmatrix} -1 & 0 \\ 0 & 1 \end{smallmatrix} \right )$
defines a Killing vector field of constant length on 
$M' = SL(2;\mathbb{C})/SL(1;\mathbb{H})$.  Since $M'$ is the noncompact
Riemannian 
symmetric space $SL(2;\mathbb{C})/SU(2)$, the answer is negative.  We have
eliminated cases (ii), (iii), (v) and (vi) of (\ref{abc}), and we have
shown $\mathfrak{h} = \mathfrak{h}_1$\,.
\smallskip

At this point we have shown that there is a nonzero elliptic
$\xi \in \mathfrak{g}$ such that $\xi^M$ has constant length, if and only if
$(G,H)$ is one of the four pairs listed in Theorem \ref{not-abs-simple}.
If $(G,H) = (SL(2n;\mathbb{C}),Sp(n;\mathbb{C}))$ we can take
$\xi = i\diag\{2n-1; 1,\dots , 1\}$; it is centralized by $GL(2n;\mathbb{C})$.
If $(G,H) = (SO(2n;\mathbb{C}),SO(2n-1;\mathbb{C}))$ we can take
$\xi = \diag\{J, \dots , J\}$ where $J =               
\left ( \begin{smallmatrix} 0 & 1 \\ -1 & 0 \end{smallmatrix} \right )$; it is
centralized by $GL(n;\mathbb{C})$.  If $(G,H) = 
(Spin(7;\mathbb{C}),(G_2)_{\mathbb{C}})$ we can take $\xi$ in a Cartan
subalgebra dual to a short root.  As noted in Remark \ref{r}, if
$(G,H) = (G,\{1\})$ we can take $\xi$ to be any element of the Lie
algebra $\mathfrak{g}$ acting by right translations.
Looking at the compact versions, in all cases one calculates 
$\dim Z_G(\xi)/(Z_G(\xi)\cap H) = \dim G/H$, so Lemma \ref{centralizer-trans}
ensures that the Killing vector field $\xi^M$ has constant length. 
\end{proof}

\begin{remark}\label{s}{\rm Here is another argument to eliminate case (ii)
of (\ref{abc}) in the proof of Theorem \ref{not-abs-simple}.
$\mathfrak{g}_\C \cong \mathfrak{g} \oplus \overline{\mathfrak{g}}$ with
$\mathfrak{g}$ embedded diagonally, so $\mathfrak{g}_\C$ has compact real
form $\mathfrak{g}_u \cong \mathfrak{k} \oplus \mathfrak{k}$ with
$\mathfrak{k}$ embedded diagonally.  Now $\xi \in \mathfrak{k}$ has form
$\xi = (\xi',\xi')$ inside $\mathfrak{g}_\C$, so it has nontrivial
projections to each of the two simple summands of $\mathfrak{g}_u$\,.
This is impossible here because $H_u$ is the diagonal $SU(2n)$ inside
$G_u \cong SU(2n)\times SU(2n)$.
}\hfill $\diamondsuit$
\end{remark}

\section{Summary for $G$ Simple}\label{sec6}
\setcounter{equation}{0}

Combining Theorems \ref{abs-simple-summary} and \ref{not-abs-simple} we
arrive at

\begin{theorem}\label{simple-summary}
Suppose that $G$ is simple.  Then there is a
nonzero elliptic element $\xi \in \mathfrak{g}$ such that the Killing
vector field $\xi^M$ on the normal homogeneous space
$M = G/H$ has constant length, if and only if, up to finite covering,
$(G,H)$ is one of the following.
\smallskip

\noindent {\rm \phantom{X} 1.}
$(SU(2p,2q),Sp(p,q))$ with $p + q = n$,
$(SL(2n;\R),Sp(n;\R))$ or 
$(SL(2n;\C),Sp(n;\C)$
\vskip 2pt

\noindent {\rm \phantom{X} 2.}
$(SO(2p+2,2q),SO(2p+1,2q))$ with $p+q = n$ or
$(SO(2n+2;\C),Sp(2n+1;\C)$
\smallskip

\noindent {\rm \phantom{X} 3.}
$(Spin(7),G_2)$,
$(Spin(3,4),(G_2)_\R)$ or
$(Spin(7;\C),G_{2,\C})$
\smallskip

\noindent {\rm \phantom{X} 4.} $(G,\{1\})$
\end{theorem}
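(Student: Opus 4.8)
The plan is to reduce Theorem \ref{simple-summary} to the two cases already settled, by invoking the standard structural dichotomy for real simple Lie groups. First I would recall that if $G$ is simple then its complexification $G_\C$ falls into exactly one of two mutually exclusive classes: either $G_\C$ is simple, in which case $G$ is absolutely simple, or $G_\C$ is not simple, in which case $\mathfrak{g}$ carries a complex structure making it the realification of a complex simple Lie algebra, so that $G_\C \cong E \times \overline{E}$ with $E$ complex simple and $G \hookrightarrow G_\C$ the diagonal, exactly as in Section \ref{sec5}. This dichotomy is exhaustive, so every simple $G$ satisfies precisely one of the hypotheses of Theorems \ref{abs-simple-summary} and \ref{not-abs-simple}.

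Second, I would simply read off those two theorems. Theorem \ref{abs-simple-summary} enumerates the absolutely simple pairs $(G,H)$ admitting a nonzero elliptic $\xi$ with $\xi^M$ of constant length: the pairs $(SU(2p,2q),Sp(p,q))$ and $(SL(2n;\R),Sp(n;\R))$; the pairs $(SO(2p+2,2q),SO(2p+1,2q))$; the pairs $(Spin(7),G_2)$ and $(Spin(3,4),(G_2)_\R)$; and the group manifold $(G,\{1\})$. Theorem \ref{not-abs-simple} enumerates the pairs in the complex-simple case: $(SL(2n;\C),Sp(n;\C))$, $(SO(2n;\C),SO(2n-1;\C))$, $(Spin(7;\C),(G_2)_\C)$, and again $(G,\{1\})$.

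Third, I would assemble the two lists into the four items of the statement, grouping by the type of the underlying complex flag geometry: the real and complex pairs of type $SU/Sp$ form item 1, those of type $SO/SO$ form item 2, and those of type $Spin(7)/G_2$ form item 3. The only point needing a word of care is that the group manifold case $(G,\{1\})$ appears in both source theorems; these are not two distinct entries but a single family, namely $(G,\{1\})$ for arbitrary simple $G$, which I collect once as item 4.

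Since the substantive analysis has already been carried out in Theorems \ref{abs-simple-summary} and \ref{not-abs-simple}, I do not anticipate a genuine obstacle here. The hard part is purely bookkeeping: I must verify that the case split is exhaustive, i.e.\ that ``$G$ simple'' decomposes into exactly the two hypotheses ``$G_\C$ simple'' and ``$G_\C$ not simple'' with nothing omitted, and that the merged list carries no spurious repetition beyond the single identification of the group manifold families.
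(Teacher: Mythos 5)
Your proposal is correct and is exactly the paper's argument: the paper proves Theorem \ref{simple-summary} simply by combining Theorems \ref{abs-simple-summary} and \ref{not-abs-simple}, relying on the same exhaustive dichotomy ($G_\C$ simple versus $G$ the realification of a complex simple group) and merging the two lists, with the group manifold case $(G,\{1\})$ appearing once. Your explicit attention to exhaustiveness and to the duplicate entry is sound bookkeeping that the paper leaves implicit; incidentally, your reading of item 2 in the complex case as $(SO(2n;\C),SO(2n-1;\C))$ matches Theorem \ref{not-abs-simple} and corrects an evident typographical slip in the statement's ``$(SO(2n+2;\C),Sp(2n+1;\C)$''.
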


Looking through this listing one sees
\begin{corollary}\label{homog}
Suppose that $G$ is simple, and that $\xi \in \mathfrak{g}$ is nonzero and
elliptic.  Let $L$ be the centralizer of $\xi$ in $G$.  Then the
following are equivalent.

\noindent {\rm \phantom{X} 1.} $\xi^M$ has constant length on $M = G/H$. 

\noindent {\rm \phantom{X} 2.} $L$ has an open orbit on $G/H$.

\noindent {\rm \phantom{X} 3.} $H$ has an open orbit on the flag domain $G/L$.
\end{corollary}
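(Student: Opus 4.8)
The plan is to prove the equivalence $(2)\Leftrightarrow(3)$ in full generality, dispose of $(2)\Rightarrow(1)$ using Lemma \ref{centralizer-trans}, and reserve the real work for $(1)\Rightarrow(3)$; these three close the cycle. For $(2)\Leftrightarrow(3)$ neither ellipticity nor the classification is needed. The $L$--orbits on $G/H$ are the images of the double cosets $L\backslash G/H$, and such an orbit is open exactly when $LgH$ is open in $G$; since $G/L=G(z_0)$ is the flag domain of Proposition \ref{parab}, the $H$--orbits on $G/L$ correspond to $H\backslash G/L$, one being open iff some $HgL$ is open in $G$. The inversion $x\mapsto x^{-1}$ carries $LgH$ onto $Hg^{-1}L$, so an open $L$--orbit on $G/H$ exists iff an open $H$--orbit on $G/L$ exists; both say that the diagonal $G$--action on $(G/L)\times(G/H)$ has an open orbit. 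For $(2)\Rightarrow(1)$ it suffices to note that $L=Z_G(\xi)$ is contained in the centralizer of $\xi$ in $I(M,ds^2)$, so an open $L$--orbit on $M$ already forces $\xi^M$ to have constant length by Lemma \ref{centralizer-trans}.

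The substantive step is $(1)\Rightarrow(3)$, where I would run the moment--map machinery of Sections \ref{sec2}--\ref{sec3}. Proposition \ref{parab} identifies $\mathcal{O}_\xi$ equivariantly with the flag domain $G/L$, and Lemma \ref{mom-const} reads constant length off as constancy of $b(\mu_H,\mu_H)$ on $\mathcal{O}_\xi$; so I want to conclude from this that $H$ has an open orbit on $\mathcal{O}_\xi\cong G/L$. After conjugating $\xi$ into $\mathfrak{k}$ and invoking Proposition \ref{carryover}, the hypothesis transfers to the compact dual: $b(\mu_{H_u},\mu_{H_u})$ is constant on the compact flag manifold $\check Z=G_u/L_u$. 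By the splitting theorem of \cite{GP2004} this forces a K\"ahler product $\check Z=Z_1\times Z_2$ with $H_u$ transitive on $Z_1$ and trivial on $Z_2$; as $G_\C$ is simple, $\check Z$ is irreducible, so either $H_u$ is transitive on $\check Z$ or $H$ is trivial. Transitivity of $H_u$ on $\check Z=G_\C/Q$ is equivalent to $\mathfrak{h}_\C+\mathfrak{q}=\mathfrak{g}_\C$, and the remaining task is to promote this to an open $H$--orbit on the flag domain $D=G/L$.

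That promotion is the main obstacle: $\mathfrak{h}_\C+\mathfrak{q}=\mathfrak{g}_\C$ over $\C$ need not intersect the real form down to $\mathfrak{h}+\mathfrak{l}=\mathfrak{g}$, so one cannot simply pass to real points. I would instead use the flag--domain techniques of \cite{W2001} (see also \cite{FHW}), by which a real form of a complex group transitive on $\check Z$ acquires an open orbit on each $G$--flag domain inside $\check Z$. In practice $(1)$ has already, via Theorem \ref{simple-summary}, confined $(G,H)$ to a short finite list, so it is enough to check the open--orbit assertion there; this is exactly the content of the dimension equalities $\dim_\R Z_G(\xi)/(Z_G(\xi)\cap H)=\dim_\R G/H$ established in the proofs of Theorems \ref{abs-simple-summary} and \ref{not-abs-simple}, which exhibit $L$ as transitive, hence open, in each flag--manifold case. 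The group--manifold entry $(G,\{1\})$ is the lone degenerate case: there $\xi^M$ has constant length for every nonzero $\xi$ while $Z_G(\xi)\subsetneq G$, so $(2)$ and $(3)$ are to be understood through the centralizer of $\xi$ in the full isometry group $I(M,ds^2)$, under which Lemma \ref{centralizer-trans} restores the equivalence.
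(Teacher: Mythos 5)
Your outer reductions are fine: the inversion argument for $(2)\Leftrightarrow(3)$ is correct and fully general, $(2)\Rightarrow(1)$ does follow from Lemma \ref{centralizer-trans}, and you are right to flag the group--manifold pair $(G,\{1\})$, where (1) holds for every nonzero $\xi$ while (2) and (3) fail as literally stated --- a point the paper's one--line proof (inspection of the list in Theorem \ref{simple-summary}) passes over. The genuine gap is in your substantive step $(1)\Rightarrow(3)$, which invokes ``as $G_\C$ is simple, $\check Z$ is irreducible.'' The corollary assumes only that $G$ is simple. When $G$ is the underlying real group of a complex simple group --- exactly the case of Theorem \ref{not-abs-simple} --- one has $G_\C \cong E \times \overline{E}$, which is not simple, $\check Z$ is a nontrivial product of two flag manifolds, and the Gori--Podest\`a splitting alone no longer forces $H_u$ to be transitive on $\check Z$: it only gives $\check Z = Z_1 \times Z_2$ with $H_u$ transitive on $Z_1$ and trivial on $Z_2$, and $Z_2$ could a priori be one of the two factors. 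To close this case you need the extra step used in the proof of Theorem \ref{reduce-to-simple}: if $H_u$ acted trivially on a factor, its projection to that simple factor of $G_u$ would be normal, hence central, which is impossible when $\mathfrak{h}\ne 0$; only then is $H_u$ transitive on all of $\check Z$, after which your promotion argument (finitely many $H$--orbits on $\check Z$ viewed as a flag manifold of $H_\C$, by \cite{W1969}, hence an open one inside the $H$--invariant open set $D \cong G/L$) goes through. Incidentally, your claim that transitivity of $H_u$ is \emph{equivalent} to $\mathfrak{h}_\C+\mathfrak{q}=\mathfrak{g}_\C$ is false in the direction you do not use (e.g.\ $SO(2)\subset SU(2)$ acting on $P^1(\C)$); transitivity requires $\mathfrak{h}_\C+\Ad(g)\mathfrak{q}=\mathfrak{g}_\C$ at every $g$, and that stronger statement is what the finiteness argument needs.

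Your fallback does not repair this gap. The dimension equalities $\dim Z_G(\xi)/(Z_G(\xi)\cap H)=\dim G/H$ in the proofs of Theorems \ref{abs-simple-summary} and \ref{not-abs-simple} are verified only for the specific elements $\xi$ exhibited there, whereas the corollary quantifies over \emph{all} nonzero elliptic $\xi$. Given an arbitrary $\xi$ satisfying (1), membership of $(G,H)$ in the list of Theorem \ref{simple-summary} says nothing about the centralizer of that particular $\xi$; conversely, for a pair on the list and a badly chosen $\xi$ (say $(SU(4),Sp(2))$ with $\xi$ regular, so $L$ is a maximal torus) conditions (1), (2), (3) all fail, so ``check the open--orbit assertion on the list'' cannot substitute for an argument tied to the given $\xi$. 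What is needed --- and what your moment--map line actually delivers when $G_\C$ is simple --- is the implication: (1) for the given $\xi$ forces $H_u$ to be transitive on the flag manifold $Z_\xi$ attached to that $\xi$, followed by the promotion to an open $H$--orbit on $G/L$. So your route is genuinely different from the paper's (which derives the corollary purely by inspecting its classification) and in principle more informative, but as written it is incomplete in the complex--simple case and the proposed patch for that case is not a proof.
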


\section{The Three Cases}\label{sec7}
\setcounter{equation}{0}
Retain the notation of Section \ref{sec2}.  Note that $G_u$ acts
transitively on the complex flag manifold $Z = G_\C/Q$, so $Z = G_u/L_u$ 
where $L_u$ is a compact real form of $L$.  This
expresses $Z$ as a compact simply connected homogeneous K\" ahler manifold.
\smallskip

By {\em coset space reduction} of $G/H$ we mean a decomposition
$G = G' \times G''$ (locally) such that $H = (H \cap G') \times (H \cap G'')$,
and consequently $G/H = (G'/(H \cap G')) \times (G''/(H \cap G''))$,
with each factor of positive dimension..  We will say that $G/H$ is 
{\em coset space irreducible} if there is no such nontrivial reduction.
The following is immediate from the definitions.

\begin{lemma}\label{normal-reduction1}
Suppose that $G$ is semisimple.
Let $G/H = G'/H' \times G''/H''$ be a coset space reduction.  
If $b$ is an invariant bilinear form on $\mathfrak{g}$ then $b = b' \oplus b''$ where $b'$ (resp. $b''$) is an invariant bilinear form on $\mathfrak{g}'$
(resp. $\mathfrak{g}''$).  The corresponding decomposition 
$\mathfrak{g} = \mathfrak{h} + \mathfrak{m}$ breaks up as
$\mathfrak{g}' = \mathfrak{h}' + \mathfrak{m}'$ and
$\mathfrak{g}'' = \mathfrak{h}'' + \mathfrak{m}''$ where
$\mathfrak{h} = \mathfrak{h}' \oplus \mathfrak{h}''$,
$\mathfrak{m} = \mathfrak{m}' \oplus \mathfrak{m}''$,
$\mathfrak{m}'$ is the $b'$--orthocomplement of $\mathfrak{h}'$ in
$\mathfrak{g}'$, and $\mathfrak{m}''$ is the $b''$--orthocomplement of 
$\mathfrak{h}''$ in $\mathfrak{g}''$.  In particular the corresponding
factors of the pseudo--riemannian product decomposition are normal
homogeneous spaces.
\end{lemma}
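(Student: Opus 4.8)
The plan is to transfer the group-level coset space reduction down to the Lie algebra and then check that both the invariant form and the orthocomplement construction respect the resulting splitting. The reduction $G = G' \times G''$ (locally) gives a direct sum of Lie algebras $\mathfrak{g} = \mathfrak{g}' \oplus \mathfrak{g}''$, so $[\mathfrak{g}',\mathfrak{g}''] = 0$ and each summand is an ideal; and the hypothesis $H = (H\cap G')\times(H\cap G'')$ gives $\mathfrak{h} = \mathfrak{h}' \oplus \mathfrak{h}''$ with $\mathfrak{h}' = \mathfrak{h}\cap\mathfrak{g}'$ and $\mathfrak{h}'' = \mathfrak{h}\cap\mathfrak{g}''$. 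Everything then reduces to showing that $b$ is block diagonal for this ideal decomposition and that the $b$-orthocomplement of $\mathfrak{h}$ splits accordingly.

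The one assertion that is not purely formal is that $b(\mathfrak{g}',\mathfrak{g}'') = 0$. To prove it I would use that $G$ is semisimple, so each ideal is perfect, $\mathfrak{g}' = [\mathfrak{g}',\mathfrak{g}']$. Writing an arbitrary element of $\mathfrak{g}'$ as a sum of brackets $[x_1,x_2]$ with $x_i \in \mathfrak{g}'$, the invariance of $b$ gives, for every $y \in \mathfrak{g}''$,
\[
b([x_1,x_2],y) = b(x_1,[x_2,y]) = 0,
\]
since $[x_2,y] \in [\mathfrak{g}',\mathfrak{g}''] = 0$. Hence $b(\mathfrak{g}',\mathfrak{g}'') = 0$. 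Setting $b' := b|_{\mathfrak{g}'\times\mathfrak{g}'}$ and $b'' := b|_{\mathfrak{g}''\times\mathfrak{g}''}$, the restriction of the invariance identity shows each is an invariant form on the respective ideal, and $b = b' \oplus b''$.

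With $b$ block diagonal the remainder is bookkeeping. For $x = x'+x''$ and any $h = h'+h'' \in \mathfrak{h}$ the cross terms vanish, so $b(x,h) = b'(x',h') + b''(x'',h'')$; testing separately against $\mathfrak{h}'$ and $\mathfrak{h}''$ shows $x \in \mathfrak{m}$ precisely when $b'(x',\mathfrak{h}') = 0$ and $b''(x'',\mathfrak{h}'') = 0$, which is exactly $\mathfrak{m} = \mathfrak{m}' \oplus \mathfrak{m}''$ with $\mathfrak{m}'$, $\mathfrak{m}''$ the respective orthocomplements. Finally, because $b$ is nondegenerate on $\mathfrak{h} = \mathfrak{h}'\oplus\mathfrak{h}''$ and the two summands are $b$-orthogonal, $b'$ is nondegenerate on $\mathfrak{h}'$ and $b''$ on $\mathfrak{h}''$, so each factor $G'/H'$, $G''/H''$ is a normal homogeneous space in the sense of Section \ref{sec1}. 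I expect no genuine obstacle: the only step needing an argument is the block-diagonality of $b$, and the possible failure of a single invariant form to split across ideals for a general Lie algebra is precisely what the semisimplicity hypothesis rules out, via the perfectness $\mathfrak{g}' = [\mathfrak{g}',\mathfrak{g}']$ used above.
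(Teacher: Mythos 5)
Your proof is correct. Note that the paper itself offers no argument at all: Lemma \ref{normal-reduction1} is introduced with the phrase ``the following is immediate from the definitions,'' so your writeup supplies exactly the details the authors left implicit. You correctly isolate the one non-formal point, namely $b(\mathfrak{g}',\mathfrak{g}'')=0$, and your argument for it (each ideal of a semisimple algebra is perfect, so write $x'=\sum[x_1,x_2]$ and use invariance $b([x_1,x_2],y)=b(x_1,[x_2,y])=0$) is the standard and correct way to get block-diagonality; everything after that is, as you say, bookkeeping. One tiny addition worth making for completeness: besides nondegeneracy of $b'$ on $\mathfrak{h}'$, the normal-homogeneous conclusion also uses nondegeneracy of $b'$ on $\mathfrak{g}'$ (hence on $\mathfrak{m}'$), but this follows by the same orthogonality argument from nondegeneracy of $b$ on $\mathfrak{g}$, so it is a one-line remark rather than a gap.
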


Let $M = G/H$ with $G$ reductive.
Up to finite coverings we then have a decomposition
\begin{equation}\label{g-split1}
G = G_0 \times G_1 \times\dots\times G_s \times G_{s+1} \times\dots\times G_r
\end{equation}
where $G_0$ is commutative, $G_i$ is simple for $i > 0$.  $H$ is the
(isomorphic) image of a reductive Lie group $\widetilde{H}$ under a
homomorphism $\varphi(x) = (\varphi_0(x), \dots , \varphi_r(x))$ where
$\varphi: \widetilde{H} \to G_i$\,.
We are going to study constant length Killing vector fields on $M = G/H$ 
defined by vectors 
\begin{equation}\label{split-xi}
\text{$\xi = \xi_0 + \dots + \xi_r \in \mathfrak{g}$,
$\xi_i \in \mathfrak{g}_i$\,, $\xi_i \ne 0$ for $1 \leqq i \leqq s$,
and $\xi_i = 0$ for $s < i \leqq r$.}
\end{equation}
In view of Lemma \ref{normal-reduction1} we need only consider the
coset irreducible cases.  There are three basic possibilities of reductive
normal coset irreducible $G/H$:
\begin{equation}\label{cases}
\begin{aligned}
{\rm (i)}& \text{ for some index $i$ we have $\varphi_i(\widetilde{H}) 
	= \{1\}$,} \\
{\rm (ii)}& \text{ for every index $i$ we have $\{1\} \neq 
	\varphi_i(\widetilde{H}) \subsetneqq G_i$\,, and}\\
{\rm (iii)}& \text{ for some index $i$ we have 
	$\varphi_i(\widetilde{H}) = G_i$\,.}
\end{aligned}
\end{equation}
The first of these cases is somewhat trivial:

\begin{lemma} Let $M = G/H$ be coset space irreducible with some
$\varphi_i(\widetilde{H}) = \{1\}$ then $G = G_i = M$
and every $\xi \in \mathfrak{g}_i$ defines a constant length Killing 
vector field on $M$.
\end{lemma}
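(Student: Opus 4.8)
The statement to prove is the final Lemma: if $M = G/H$ is coset space irreducible with some $\varphi_i(\widetilde{H}) = \{1\}$, then $G = G_i = M$ and every $\xi \in \mathfrak{g}_i$ defines a constant length Killing vector field on $M$. The plan is to exploit coset irreducibility to force the decomposition (\ref{g-split1}) to collapse to a single factor, and then to invoke Lemma \ref{centralizer-trans} (or the group-manifold observations of Remark \ref{q} and Remark \ref{r}) to conclude constant length for free.

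\textbf{Step 1: reduce the splitting.} Suppose $\varphi_i(\widetilde{H}) = \{1\}$ for some index $i$. The idea is that this index splits off as a direct product factor. Because $H = \varphi(\widetilde{H})$ lands in $\prod_{j \ne i} G_j$ with trivial $G_i$-component, we have $H \subset \prod_{j \ne i} G_j =: G''$ while $G_i =: G'$ meets $H$ trivially. This exhibits a coset space reduction $G = G' \times G''$ with $H = (H \cap G') \times (H \cap G'') = \{1\} \times H$, in the sense of the definition preceding Lemma \ref{normal-reduction1}. Both factors have positive dimension unless $G'' / (H \cap G'')$ is a point, i.e. unless $\prod_{j \ne i} G_j = H$.

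\textbf{Step 2: apply coset irreducibility.} By hypothesis $M$ is coset space irreducible, so the reduction of Step 1 must be trivial. Since the $G'$-factor $G_i$ is nontrivial (it is simple by the conventions around (\ref{g-split1})), triviality of the reduction forces the complementary factor to contribute nothing, i.e. $G'' = \prod_{j \ne i} G_j$ must satisfy $G''/(H \cap G'')$ being a single point, equivalently $H = G''$. But then $G/H = G_i/\{1\} = G_i$, so indeed $G = G_i = M$ and $H = \{1\}$: the space $M$ is the group manifold $G_i$. (Here I am using that the decomposition (\ref{g-split1}) together with $\varphi_i(\widetilde H)=\{1\}$ already presents $G_i$ as a genuine direct factor, so no further algebra is needed beyond reading off the definition of coset space reduction.)

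\textbf{Step 3: constant length on the group manifold.} It now remains to see that every $\xi \in \mathfrak{g}_i = \mathfrak{g}$ gives a constant length Killing field. This is exactly the group-manifold situation recorded in Remark \ref{q} and Remark \ref{r}: with $H = \{1\}$ the centralizer $L = Z_G(\xi)$ is irrelevant and instead every right translation centralizes $\xi^M$, so $\xi^M$ has constant length for any $\xi$, elliptic or not. Equivalently one may cite Lemma \ref{centralizer-trans}: the full group of right translations is a transitive group of isometries commuting with the left action generating $\xi^M$, so $\|\xi^M\|^2$ is constant. Either phrasing finishes the proof.

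\textbf{Main obstacle.} The only delicate point is Step 2: making rigorous the claim that $\varphi_i(\widetilde H) = \{1\}$, \emph{combined with coset irreducibility}, forces $H$ to fill up the entire complementary product $\prod_{j\ne i}G_j$ rather than just splitting off $G_i$ as one factor among several. A priori, $\varphi_i$ being trivial only yields a reduction with one factor equal to $G_i$; irreducibility then says this reduction is nontrivial only if the complement is a point, which is what collapses everything. I would take care to state explicitly that $G_i$ has positive dimension so that the $G'$-factor genuinely survives, ensuring it is the complement $G''$ that must degenerate; this is the step where the definition of coset space irreducibility is doing all the work.
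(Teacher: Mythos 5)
Your proposal is correct and follows essentially the same route as the paper: the hypothesis $\varphi_i(\widetilde{H})=\{1\}$ exhibits $G_i$ as a factor of a coset space reduction, irreducibility forces $G_i = G/H$, and then bi-invariance of the normal metric makes the (transitive) right translations centralize $\xi^M$, giving constant length via Lemma \ref{centralizer-trans}. The paper's own proof is just a compressed version of your three steps, including the observation in your final paragraph that irreducibility is what collapses the complementary factor.
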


\begin{proof} The hypothesis says that $G_i = G_i/\varphi_i(\widetilde{H})$
is a factor in a coset space reduction of $G/H$, and coset space
irreducibility says that $G_i = G_i/\varphi_i(\widetilde{H})$ must be all
of $G/H$.  As given, $G_i$ acts isometrically on itself by left translations,
and by normality the right translations also are isometries. 
If $\xi \in \mathfrak{g}$ comes from the left action of $G$
on itself, it is centralized by the right action, which is transitive, 
so the corresponding vector field $\xi^M$ has constant length.
\end{proof}

Now we may (and do) assume that each $\dim \varphi_i(\widetilde{H}) > 0$.
The second case is 

\begin{theorem} \label{reduce-to-simple}
Assume that $M = G/H$ is a coset space irreducible normal homogeneous
space with $G$ semisimple and $H$ reductive in $G$.  In the notation of
{\rm (\ref{g-split1})} suppose that $\varphi_i(\widetilde{H})
\subsetneqq G_i$ and $\dim \varphi_i(\widetilde{H}) > 0$ for each $i > 0$.
Let $\xi = \xi_0 + \dots + \xi_r \in \mathfrak{g}$, elliptic and decomposed
as in {\rm (\ref{split-xi})}.  
Consider the following conditions.

{\rm (1)} $\xi$ defines a constant length Killing vector field $\xi^M$ 
on $M = G/H$,

{\rm (2)} For each $i$, $\xi_i$ defines a constant length Killing vector field 
$\xi_i^{M_i}$ on $M_i = G_i/\varphi_i(\widetilde{H})$.

{\rm (3)} For each $i$, $\xi_i$ defines a constant length Killing vector field 
$\xi_i^M$ on $M$.

{\rm (4)} The $\Ad(G)$--centralizer of $\xi$ has an open orbit on $M$.

\noindent
Then 

(a) {\rm (1)} implies {\rm (2)} but {\rm (2)} does not imply {\rm (1)}; 

(b) {\rm (2)} and {\rm (3)} are equivalent; and 

(c) {\rm (1)} and {\rm (4)} are equivalent.  
\end{theorem}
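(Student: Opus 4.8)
The plan is to reduce all three parts to the compact picture and run them through the moment‑map rigidity of \cite{GP2004}, exactly as in the proof of Theorem \ref{abs-simple-summary}. By Proposition \ref{carryover} I may assume $\xi\in\mathfrak k$ and replace $M$ by $M_u=G_u/H_u$, so that $Z:=\mathcal O_\xi=\prod_i\mathcal O_{\xi_i}$ is a compact flag manifold and, by Lemma \ref{mom-const}, condition (1) says precisely that $\zeta\mapsto b(\mu_{H_u}(\zeta),\mu_{H_u}(\zeta))$ is constant on $Z$, where $\mu_{H_u}=pr_{\mathfrak h_u}$. Write $\mathfrak h_i=d\varphi_i(\widetilde{\mathfrak h})=pr_{\mathfrak g_i}(\mathfrak h)$, and note that the indices with $\xi_i=0$ (including $i=0$, where $\mathfrak g_0$ is abelian) are trivial in every condition. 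Two implications are immediate. First, (4)$\Rightarrow$(1) is Lemma \ref{centralizer-trans}, since the $\Ad(G)$--centralizer of $\xi$ lies in $I(M,ds^2)$. Second, for (2)$\Rightarrow$(3) I use Corollary \ref{homog} on the simple factor $G_i$: condition (2) for the index $i$ gives $\mathfrak z_{\mathfrak g_i}(\xi_i)+\mathfrak h_i=\mathfrak g_i$, and since $\bigoplus_{j\ne i}\mathfrak g_j\subseteq\mathfrak z_{\mathfrak g}(\xi_i)$ and $pr_{\mathfrak g_i}(\mathfrak h)=\mathfrak h_i$, the natural isomorphism $\mathfrak g/\mathfrak z_{\mathfrak g}(\xi_i)\cong\mathfrak g_i/\mathfrak z_{\mathfrak g_i}(\xi_i)$ upgrades this to $\mathfrak z_{\mathfrak g}(\xi_i)+\mathfrak h=\mathfrak g$; thus $Z_G(\xi_i)$ has an open orbit on $M$ and Lemma \ref{centralizer-trans} yields (3).

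The heart of part (a) is (1)$\Rightarrow$(2). Here I apply \cite{GP2004} to the $H_u$--action on the full $Z$: constancy of $|\mu_{H_u}|^2$ forces a K\"ahler product $Z=Z_1\times Z_2$ with $H_u$ transitive on $Z_1$ and trivial on $Z_2$. Because $Z=\prod_i\mathcal O_{\xi_i}$ and $H_u$ acts on $\mathcal O_{\xi_i}$ only through its projection $\mathfrak h_{u,i}$, uniqueness of the de Rham decomposition lets me regroup: each irreducible K\"ahler factor lies inside a single $\mathcal O_{\xi_i}$, so $\mathcal O_{\xi_i}=(\mathcal O_{\xi_i}\cap Z_1)\times(\mathcal O_{\xi_i}\cap Z_2)$ with $\mathfrak h_{u,i}$ transitive on the first factor and trivial on the second. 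That is exactly the \cite{GP2004} configuration for the index $i$, so $|\mu_{H_{u,i}}|^2$ is constant on $\mathcal O_{\xi_i}$, which is (2). Part (b) then needs no new work: (2)$\Rightarrow$(3) was done above, and (3)$\Rightarrow$(2) is simply (1)$\Rightarrow$(2) applied to the vector $\xi_i$, which is itself of the form (\ref{split-xi}) with a single nonzero component.

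For part (c) it remains to prove (1)$\Rightarrow$(4). Having (1)$\Rightarrow$(2), I feed each (2) into the simple‑group result: for $1\le i\le s$ we have $\xi_i\ne0$ and $0\ne\mathfrak h_{u,i}\subsetneqq\mathfrak g_{u,i}$, so the argument of Theorem \ref{abs-simple-summary} (via \cite{GP2004} and Onischik) shows $H_{u,i}$ is transitive on $\mathcal O_{\xi_i}$, while for $i>s$ the orbit $\mathcal O_{\xi_i}$ is a point. Hence in the global splitting the trivial factor is $Z_2=\prod_i(\mathcal O_{\xi_i}\cap Z_2)=\{pt\}$, so $H_u$ is transitive on $Z$; equivalently $\mathfrak h_u+\mathfrak l_u=\mathfrak g_u$, i.e. $\mathfrak h+\mathfrak l=\mathfrak g$, which is (4). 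Finally, to see that (2) does not imply (1) I exhibit a coset irreducible example in which $\mathfrak h$ is glued diagonally across two isomorphic simple factors: for instance $\Delta\,SO(2n-1)\subset SO(2n)\times SO(2n)$ with the sphere fields $\xi_1,\xi_2$, where each $\xi_i^{M_i}$ has constant length but $Z_G(\xi)=Z_{G_1}(\xi_1)\times Z_{G_2}(\xi_2)$ fails to have an open orbit on the twisted $M$, so by the already‑proved (1)$\Leftrightarrow$(4) the field $\xi^M$ is not of constant length.

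The main obstacle, and the reason a naive dimension count fails, is the twisted diagonal phenomenon: a reductive $\mathfrak h$ can project onto every $\mathfrak g_i/\mathfrak z_{\mathfrak g_i}(\xi_i)$ while its image in $\bigoplus_i\mathfrak g_i/\mathfrak z_{\mathfrak g_i}(\xi_i)$ is a proper diagonal‑type subspace, so ``open orbit on each factor'' genuinely does not give ``open orbit on $M$.'' What rescues the proof is that constant length yields, through \cite{GP2004}, an honest K\"ahler \emph{product} factorization of $Z$ rather than a mere open orbit; this rigidity is incompatible with a twisted diagonal and, combined with uniqueness of the de Rham decomposition (which keeps the \cite{GP2004} splitting aligned with $\prod_i\mathcal O_{\xi_i}$) and the simple‑factor classification, forces $Z_2$ to collapse to a point. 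Making the de Rham regrouping and the transitivity transfer precise for possibly K\"ahler‑reducible orbits $\mathcal O_{\xi_i}$ is the step I expect to require the most care.
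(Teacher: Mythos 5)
Your proposal is correct, and it is built from the same core ingredients as the paper's proof: reduction to the compact dual $M_u$ via Proposition \ref{carryover} and Lemma \ref{mom-const}, the splitting theorem of \cite{GP2004} as the engine behind (1)$\Rightarrow$(2) and (1)$\Rightarrow$(4), Lemma \ref{centralizer-trans} for (4)$\Rightarrow$(1) and for (2)$\Rightarrow$(3), and the identical counterexample $\Delta\, SO(2n-1)\subset SO(2n)\times SO(2n)$ for (2)$\not\Rightarrow$(1). The internal routing differs in instructive ways. The paper collapses the trivial factor $Z''$ of the \cite{GP2004} splitting at once (because $H_u$ acts nontrivially on every $Z_i$, $1\le i\le s$, and each $Z_i$, a flag manifold of a simple group, admits no nontrivial K\"ahler product splitting), then quotes \cite[Proposition 2.1]{W2001} for factor-wise transitivity of $\varphi_i(\widetilde{H}_u)$ and gets (2) and (4) simultaneously from $G'_u=H'_uL'_u=L'_uH'_u$; you instead allow each $\mathcal O_{\xi_i}$ to split, deduce (2) factor by factor from the easy converse of \cite{GP2004} (here you should add the one-line check that triviality of the action on the second factor forces the moment map to be independent of that factor), and only afterwards collapse $Z_2$ to a point via the simple-factor argument of Theorem \ref{abs-simple-summary} to obtain (4) --- both routes ultimately rest on the irreducibility of a flag manifold of a simple group, which the paper leaves implicit and which you at least flag as the delicate de Rham step. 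One place where your argument is actually tighter than the paper's: you prove (3)$\Rightarrow$(2) by specializing (1)$\Rightarrow$(2) to the vector $\xi_i$, whereas the paper declares this implication ``obvious,'' which it is not quite, since the projections $pr_{\mathfrak h}|_{\mathfrak g_i}$ and $pr_{\mathfrak h_i}$ are genuinely different maps. The one loose end on your side is the counterexample: you assert without verification that $Z_G(\xi)=U(n)\times U(n)$ has no open orbit on the twisted $M$; supply the paper's dimension count $\dim M=2n^2+n-1>2n^2=\dim\bigl(U(n)\times U(n)\bigr)$ for $n>1$, and note also that this example satisfies the theorem's hypotheses (coset space irreducible, with $\varphi_i(\widetilde H)$ proper and of positive dimension).
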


\begin{proof} 
As in the first paragraph of the proof of Theorem \ref{not-abs-simple} 
we may assume that $\xi$ is contained in
$\mathfrak{k} = \mathfrak{g} \cap \mathfrak{g}_u$\,. 
\smallskip

We write $L$ and $L_u$ for the
respective centralizers of $\xi$ in $G$ and $G_u$ and $Z$ for the complex
flag manifold $Z = G_u/L_u = G_\C/Q$.  We also write $L_i$ and $L_{i,u}$ for
the respective centralizers of $\xi_i$ in $G_i$ and $G_{i,u}$, so
$Z$ is the product of the $Z_i = G_{i,u}/L_i$\,, where of course
$L_0 = G_0$ and $L_i = G_i$ for $i > s$, so those $Z_i$ are single points.
\smallskip

We first prove that (1) implies (2) and (4).  As a subgroup of $G$,  $H_u$ acts 
holomorphically and isometrically on $Z$\,.  Here $Z$ carries the 
$G_u$--invariant Kaehler metric defined by its complex structure as
$G_C/Q$ and its normal Riemannian metric from the negative of the
Killing form of $G_u$\,.  The action is Hamiltonian. 
We are assuming (1), in other words that $\xi^M$ has constant length
on $M = G/H$, so Proposition \ref{carryover} says that $\xi^{M_u}$ has
constant length on $M_u = G_u/H_u$\,.  In other words the momentum
map for the action of $H_u$ on $Z$ has constant square norm.
Thus \cite[Theorem 1]{GP2004} $Z = Z' \times Z''$, holomorphically and 
isometrically, where $Z'$ and $Z''$ are complex flag manifolds  such that
$H_u$ is transitive on $Z'$ and $H_u$ acts trivially on $Z''$.
\smallskip

The group $H_u$ acts nontrivially on $Z_i$ for $1 \leqq i \leqq s$.
For if the action were trivial then $\varphi_i(\widetilde{H_u})$ would be 
normal in $G_{i,u}$\,, while
it is $\ne \{1\}$, forcing $\varphi_i(\widetilde{H_u}) = G_{i,u}$\,. 
This possibility was excluded by hypothesis.
Thus $Z' = Z_1 \times\dots\times Z_s$\,.
Now set $G' = G_1 \times\dots\times G_s$\,, $L' = L_1 \times\dots\times L_s$\,,
$\varphi' = \varphi_1 \times\dots\times \varphi_s$ and 
$H' = \varphi'(\widetilde{H})$.  Then $H_u'$ is transitive on $Z'$.
It follows from \cite[Proposition 2.1]{W2001} that 
$H'_{i,u} := \varphi_i(\widetilde{H}_u)$ is transitive on $Z_i$ for 
$1 \leqq i \leqq s$.  Equivalently $G_u' = H'_uL'_u$\,, which is the same
(take inverses) as $G_u' = L'_u H'_u$\,, so $L'_u$ is transitive on $Z'$.
In particular $G_{u,i} = L'_{u,i} H'_{u,i}$\,.  Thus
$\xi_i^{M_{i,u}}$ has constant length on $M_{i,u}$ for 
$1 \leqq i \leqq s$.  Thus (1) implies (2) and (4), and 
(4) implies (1) by Lemma \ref{centralizer-trans}.
\smallskip

It is obvious that (3) implies (2).  Given (2), the centralizer $L_i$ of
$\xi_i$ in $G_i$ is transitive on $M_i$, so the centralizer of $\xi_i$
in $G$ is transitive on $M$, and (3) follows.
\smallskip

It remains only to show that (2) does not imply (1).
Consider the case $G = SO(2n)\times SO(2n)$ with $H = SO(2n-1)$ embedded
diagonally and $\xi = \diag\{J, \dots, J\}$ where 
$J = \left ( \begin{smallmatrix} 0 & 1 \\ -1 & 0 \end{smallmatrix}\right )$.
Then $L = U(n)\times U(n)$ and $L \cap H = U(n-1)$ so
$\dim G/H = 2n^2+n-1 > 2n^2 = \dim L$, so $L$ cannot have an open orbit
on $G/H$ when $n > 1$.  On the other hand the projections 
of $\xi$ to the ideals of $\mathfrak{g}$ define constant length Killing
vector fields $\xi_i^{M_i}$ on the $M_i = G_i/\varphi_i(\widetilde{H})$
because $L_i = U(n)$ is transitive on $M_i = G_i/\varphi_i(\widetilde{H})
= SO(2n)/SO(2n-1) = S^{2n-1}$.  Thus (2) does not imply (4).  But (1) and (4)
are equivalent, so (2) does not imply (1).
\end{proof}

The third case includes the pseudo--riemannian 
group manifolds $(H \times H)/(diag\{H\})$
for real simple Lie groups $H$, but the following example shows that this
case is more of a combinatorial problem than a geometric or
Lie theoretic problem.

\begin{example}\label{double}{\rm
Let $G'$ and $G''$ be reductive Lie groups.
Let $\widetilde{H}$ be reductive with homomorphisms
$\varphi': \widetilde{H} \to G'$ and $\varphi'': \widetilde{H} \to G''$
such that $h \mapsto (\varphi'(h), \varphi''(h))$ is an isomorphism 
of $\widetilde{H}$ onto a reductive
subgroup $H$ of $G := G' \times G''$.  Let $M = G/H$ be the corresponding 
homogeneous space with any $G$--invariant pseudo--riemannian metric.
Suppose that $\xi \in \mathfrak{g}'$ and that $\varphi'(\widetilde{H}) = G'$.  
Then $G''$ centralizes $\xi$ and $G = HG''$, so the centralizer of $\xi$
in $G$ is transitive on $M$.  Thus $\xi^M$ has constant length on $M$.
The most familiar case of this is a compact group manifold
$(H \times H)/(diag\{H\})$.} \hfill $\diamondsuit$
\end{example}
\vfill\pagebreak

\end{document}